\documentclass[pdflatex,sn-mathphys-num]{sn-jnl}% Math and Physical Sciences Numbered Reference Style
\usepackage[utf8]{inputenc}
\usepackage{wrapfig}
\usepackage{comment}
\usepackage[T1]{fontenc}
\usepackage{amsmath,amssymb,amsthm}
\usepackage{mathtools}
\usepackage{booktabs}
\usepackage{array}
\usepackage{multirow}
\usepackage{graphicx}
\usepackage{hyperref}
\usepackage[capitalise,noabbrev]{cleveref}
\usepackage{enumitem}
\usepackage{xcolor}
\usepackage{caption}
\usepackage{tikz}
\usetikzlibrary{fit,backgrounds,decorations.pathreplacing}

\newtheorem{theorem}{Theorem}
\newtheorem{proposition}[theorem]{Proposition}
\newtheorem{definition}{Definition}
\newtheorem{corollary}{Corollary}
\newtheorem*{remark}{Remark}

\usepackage{lmodern}
\usepackage[T1]{fontenc}
\usepackage{placeins}
\usepackage{xurl}
\hypersetup{pdftitle={Scaling Local Solvers with the Many-Body Expansion},
            pdfauthor={Federico Zahariev and Amr Sabry}}

\newcommand{\R}{\mathbb{R}}
\newcommand{\MBE}{\mathrm{MBE}}
\newcommand{\abs}[1]{\lvert #1 \rvert}

\newcommand{\fzadd}[1]{#1}
\newcommand{\fznote}[1]{}
\newcommand{\fzdata}[1]{#1}
\newcommand{\fzfix}[1]{#1}
\newcommand{\fzdom}[1]{#1}
\newcommand{\fzrev}[1]{#1}

\begin{document}

% ---- Title ----
\title{Scaling Local Solvers with the Many-Body Expansion}
%\title{Exact Decomposition of Combinatorial Optimization Problems via the Many-Body Expansion}

\author*[1,2]{\fnm{Federico} \sur{Zahariev}}\email{fzahari@iastate.edu}
\author[3]{\fnm{Amr} \sur{Sabry}}\email{sabry@iu.edu}
\equalcont{These authors contributed equally to this work.}
\affil*[1]{\orgdiv{Department of Chemistry}, \orgname{Iowa State University}}
\affil[2]{\orgname{Ames Laboratory}}
\affil[3]{\orgdiv{Department of Computer Science}, \orgname{Indiana University}}

% ===========================================================================

\abstract{
We introduce a Many-Body Expansion framework for scaling black-box solvers
for discrete optimization. The method decomposes a large problem into
fragments, repeatedly improves each fragment with the rest of the assignment
held fixed, and then applies targeted corrections on unions of fragments.
The solver itself is unchanged: it only receives a restricted subproblem and
returns a candidate assignment.

The framework separates decomposition from solver design. M\"obius inversion
gives an exact fragment expansion of the objective, and for pairwise
objectives such as graph coloring, MAX-CUT, and QUBO the expansion closes at
second order. The promoted solver is monotone by construction, \fzfix{terminates in
finitely many productive sweeps for non-negative integer-valued objectives,}
and connects to junction-tree dynamic
programming when the fragment interaction structure is acyclic \fzfix{and
full response functions are propagated}.

On graph coloring benchmarks, the promoted versions of TabuSearch and
simulated annealing (SA) \fzrev{generally outperform flat applications of the same
solvers, often finding better solutions in less CPU time, and the gain proves
insensitive to the choice of graph partition. A facility-location study
delimits the claim: there the partition must respect the objective's
interaction locality, and the promoted solver reaches within 0.3--1.4\% of
GLPK-certified optima at every tested size.} \fzfix{The same wrapper interface is used in each solver-swap case, and the
facility-location check reuses the same monotone guard with a sign-adjusted
interaction diagnostic; the improvements therefore come from the MBE
promotion mechanism rather than solver-specific tuning.}
}

\keywords{combinatorial optimization, many-body expansion, decomposition
methods, local search, graph coloring, facility location, reproducibility}

\maketitle

% ===========================================================================
\section{Introduction}

Many optimization methods work well locally. A heuristic may quickly improve a
small graph coloring instance, a quantum routine may handle a modest QUBO, and
an exact solver may be practical on a bounded-size subproblem. The difficulty
is scale: success on small pieces does not by itself say how to coordinate
those pieces into an efficient high-quality solution of the full problem.

This paper develops a general promotion mechanism for such local solvers. The
input is any solver that can improve a restricted subproblem while the rest of
the assignment is held fixed. The promotion partitions the variables into
fragments, \fzfix{cycles through the fragments until a sweep produces no
strict objective decrease,} and then applies higher-order corrections to
selected unions of fragments. The solver remains a black box throughout. It does not need to know
how the global objective is decomposed, how fragments interact, or whether it
is being used as part of a larger algorithm.

\begin{wrapfigure}[17]{r}{0.4\textwidth}
\vspace{-1em}
\begin{tikzpicture}[
    every node/.style={circle, draw, minimum size=8mm, font=\small}
  ]
  \node (1) at (0,  1.5) {1};
  \node (2) at (-1.2, 0) {2};
  \node (3) at ( 1.2, 0) {3};
  \node (4) at (0, -1.5) {4};
  \draw (1)--(2); \draw (1)--(3); \draw (2)--(3);
  \draw (2)--(4); \draw (3)--(4);
  \node[draw=none, font=\small] at (-1.7, 0.95) {$F_1 = \{1,2\}$};
  \node[draw=none, font=\small] at ( 1.7,-0.95) {$F_2 = \{3,4\}$};
  \node[draw=none, font=\footnotesize] at (0.15, 0.25) {$\Delta$};
\end{tikzpicture}
\caption{A two-fragment coloring problem. The delta value counts boundary
conflicts between fragments; a joint solve of $F_1\cup F_2$ measures whether
those fragments can still improve together.}
\label{fig:diamond}
\end{wrapfigure}

The small graph in \cref{fig:diamond} shows the basic mechanism. Suppose
$F_1=\{1,2\}$ and $F_2=\{3,4\}$. A local solver first improves the coloring
of $F_1$ while the colors in $F_2$ are held fixed, then improves $F_2$ with
$F_1$ fixed, and repeats until neither local call improves the objective.
\fzfix{At that point the pair is objective-stable: neither restricted call
has returned a strictly improving candidate against the other's current
values.} Each
edge crossing the fragment boundary contributes to the delta value: if any of
them is monochromatic, the two fragments remain in conflict and are natural
candidates for a joint correction. Re-solving $F_1\cup F_2$ then measures the
actual residual improvement available to the pair. Delta identifies where to
look; the residual says whether a move on that union can still help.

The mathematical structure behind this promotion is the Many-Body Expansion
(MBE), a decomposition principle from computational quantum chemistry
\cite{gordon2012fragmentation,fedorov2007fmo}. In chemistry, the energy of a
large molecular system is organized into contributions from individual
fragments, pairs of fragments, triples, and so on. The bookkeeping is
governed by M\"obius inversion on the lattice of fragment subsets. We use the
same structure for discrete optimization: the objective is decomposed into
fragment, pair, and higher-order terms, while solver calls are used to improve
assignments on the corresponding fragment unions.

For pairwise objectives such as graph coloring, MAX-CUT, and QUBO, the MBE
expansion closes at second order: fragment and pair terms recover the
objective value exactly. Optimization still requires coordination among
fragments, but the same hierarchy supplies the coordination mechanism. The
first stage performs self-consistent local improvement. Later stages revisit
pairs, triples, and larger fragment unions when the diagnostics indicate that
their interaction still matters. Every proposed update is accepted only when
it does not worsen the objective, so the promoted solver is monotone by
construction. \fzfix{For non-negative integer-valued objectives, the
self-consistency stage terminates after finitely many productive sweeps.}

This perspective also connects MBE to classical decomposition methods. When
the fragment interaction structure is acyclic, exact coordination is achieved
by message passing along the tree, matching the dynamic-programming principle behind junction-tree methods. 
For general interaction graphs, the same MBE
view gives a practical correction hierarchy: local solvers handle the
fragments, delta values prioritize boundary conflicts, and residual checks
certify when a chosen fragment union is exhausted with the exterior fixed.

We evaluate the framework on graph coloring and MAX-CUT benchmarks using
structurally different solvers, including TabuSearch and simulated
annealing (SA).
The same MBE pipeline wraps each solver; only the fragment-level solver
function changes. \fzrev{The promoted solvers outperform flat applications of the same solvers
on most tested instances, often finding better assignments in less CPU time;
the exceptions (SA on two instances, and spatially random partitions in the
facility-location study) are reported and delimit the claim.} The experiments suggest that the benefit
comes from the decomposition and correction structure itself, not from
solver-specific tuning.

% ===========================================================================
\section{The MBE Optimization Algorithm}

This section describes MBE as an algorithmic wrapper around an arbitrary
local solver. The formal expansion and its guarantees are given in the next
section; here the goal is to show how the promoted solver is actually run.

Let $f\colon X_1\times\cdots\times X_n \to \R$ be an objective over
discrete variables, with lower values preferred. A solver call is
restricted to a subset of variables. Given a fragment $F\subseteq[n]$
and a current global assignment $x$, the solver returns a candidate
assignment $x'$ that agrees with $x$ outside~$F$. The solver may be a
heuristic, an exact solver, a quantum routine, or a brute-force
search; MBE treats it as a black box.

In the self-consistency stage, every solver call is wrapped by the same
non-worsening acceptance rule:
\[
  x \leftarrow
  \begin{cases}
    x' & \text{if } f(x') \le f(x),\\
    x  & \text{otherwise,}
  \end{cases}
\]
\fzfix{and cycling terminates after the first sweep that produces no
\emph{strict} decrease of $f$. Equal-cost proposals may thus be accepted
(they can move boundary values at no objective cost), but they do not count
as progress: the finite-convergence guarantee in \cref{thm:convergence}
counts only strictly improving sweeps, so accepting ties affects neither the
bound nor monotonicity. The same rule is used unchanged in the one-shot
correction passes.}

The wrapped solver may restart, accept uphill moves internally, or use any
search strategy it likes. \fzfix{The global assignment never worsens: a
proposed fragment update is adopted only when it does not increase the full
objective.}

\paragraph{Stage~1: self-consistent fragment cycling.}
Partition the variables into disjoint fragments
$F_1,\ldots,F_k$. Starting from an initial assignment $x^{(0)}$, MBE
sweeps through the fragments. On fragment $F_i$, the variables outside
$F_i$ are held fixed and the solver is asked to improve only the
variables inside~$F_i$. After each call, the keep-if-not-worse rule
decides whether the global assignment is updated.

A sweep is one pass through all fragments. \fzfix{Sweeps continue until a
full pass produces no strict decrease of the objective (equal-cost updates
may still be accepted during that pass). The terminal assignment is
objective-stable relative to the solver calls made in the final sweep: none
of those calls returned a strictly improving update against its then-current
boundary values.} For non-negative integer-valued objectives, this stage has
finite descent: each productive sweep decreases the objective by at least
one, so only finitely many productive sweeps are possible.

The main structural parameter is the number of fragments $k$. Large fragments
give the local solver more context but make each call harder; small fragments
make calls cheaper but push more structure into the boundaries. The
implementation may also impose solver-specific iteration caps, random seeds,
cooling schedules, or tabu-tenure parameters, but these are properties of the
chosen local solver rather than part of the MBE interface itself.

\paragraph{Stage~2: diagnostics and higher-order corrections.}
After Stage~1, MBE looks for interactions that were not resolved by
single-fragment updates. For pairwise objectives, the most direct diagnostic
is the \emph{delta spectrum}. For two fragments $F_i$ and $F_j$,
$\Delta(F_i,F_j)$ is the contribution to the current objective from terms
that cross between them. In graph coloring, it is simply the number of
monochromatic edges with one endpoint in $F_i$ and the other in $F_j$.

Large delta values identify fragment pairs whose boundary interaction is
still costly. MBE therefore applies dimer corrections first to pairs with
positive or large delta:
\[
  F_i,\ F_j
  \quad\leadsto\quad
  F_i\cup F_j .
\]
The solver is called on the union while all variables outside the union are
held fixed, and the keep-if-not-worse rule is applied again. The same idea can
be continued to triples and larger unions when pair corrections do not
remove the remaining conflict.

Delta is a guide for choosing which unions to revisit. The corresponding
certificate is the residual of the union: how much the objective can still
be improved by changing that union while the exterior is frozen. If an exact
solve of $U$ shows zero residual, then no move confined to $U$ can improve
the current assignment. With heuristic solvers, the same quantity is used
operationally as an observed improvement signal rather than a formal
certificate.

\paragraph{Merging persistent interactions.}
Sometimes a small number of fragment pairs continue to carry most of the
residual conflict after dimer and trimer corrections. In that case MBE can
merge the highest-conflict pair permanently into a larger fragment and
restart the self-consistency stage with the new partition. Merging increases
the cost of later local solves, but it gives the solver more freedom in the
part of the problem where the current decomposition is too restrictive.

\paragraph{Graph coloring specialization.}
For graph coloring with $q$ colors on $G=(V,E)$, the variables are
colors $x_v\in\{0,\ldots,q-1\}$ and $f(x)=|\{(u,v)\in E : x_u=x_v\}|$
counts monochromatic edges. A fragment is a subset of vertices. A
solver call on $F$ changes only the colors of vertices in $F$,
treating adjacent vertices outside $F$ as fixed boundary values.

\fzfix{In this setting, Stage~1 repeatedly recolors fragments until a
complete sweep produces no strict objective decrease; equal-cost recolorings
may be accepted but are not counted as progress.} Stage~2 computes the crossing conflicts
between fragment pairs and re-solves selected unions of fragments. The same
pipeline applies to TabuSearch, SA, DSATUR, or any other
solver that can be restricted to a subset of vertices. MAX-CUT is obtained
by taking $q=2$ and changing the objective from monochromatic edges to cut
violations; the MBE wrapper itself is unchanged.

% ===========================================================================
\section{The Many-Body Expansion: Formal Guarantees}
\label{sec:mbe}

The algorithm of the previous section makes three implicit promises:
that cycling terminates, that the correction hierarchy never undoes
progress, and that the delta diagnostics point at real
improvements. This section makes those promises precise and identifies the
exact conditions under which the second-order expansion is exact rather than
heuristic.

Let $f\colon X_1\times\cdots\times X_n \to \R$ be an objective with an
additive decomposition
\[
  f(x)=\sum_{\alpha\in A}\phi_\alpha(x_\alpha),
\]
where each term $\phi_\alpha$ depends on a subset of variables. Partition
$[n]$ into $k$ disjoint fragments $F_1,\ldots,F_k$. The fragment support of
a term is
\[
  \mathrm{supp}_{\mathcal F}(\alpha)
  := \{i\in[k] : \phi_\alpha \text{ depends on at least one variable in } F_i\}.
\]
For $S\subseteq[k]$, define
\[
  f_S(x)
  := \sum_{\alpha:\,\mathrm{supp}_{\mathcal F}(\alpha)\subseteq S}
     \phi_\alpha(x_\alpha),
\]
with $f_\emptyset=0$ when there is no constant term.

\begin{definition}\label{def:mbe}
The \emph{cluster term} for $S\subseteq[k]$ is
\[
  C_S \;:=\; \sum_{T\subseteq S}(-1)^{\abs{S}-\abs{T}}\,f_T.
\]
The \emph{MBE at order~$p$} is the partial sum
\[
  f^{(p)} \;:=\; \sum_{\substack{S\subseteq[k]\\1\le\abs{S}\le p}} C_S.
\]
At full order, $f^{(k)}\equiv f$.
\end{definition}

\begin{remark}
The cluster term $C_S$ is the M\"obius inversion of the family
$\{f_T\}_{T\subseteq S}$ on the Boolean lattice of fragment subsets. The
alternating sign acts on the restricted objectives $f_T$, not on $f_S$ alone.
This distinction matters: summing only scalar signs would give zero for every
nonempty cluster by the binomial identity.
\end{remark}

% ---------------------------------------------------------------------------
\subsection{Finite Convergence}

\begin{theorem}\label{thm:convergence}
\fzfix{For non-negative integer-valued objectives, Stage~1 with non-worsening
acceptance terminates after at most $M=f(x^{(0)})$ \emph{productive} sweeps
(sweeps containing at least one strictly improving accepted update) plus one
terminal sweep. The terminal sweep certifies that no solver call made in it
returned a strictly improving update against the boundary values it was
given.}
\end{theorem}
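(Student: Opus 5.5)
The argument is a standard potential-function descent, with $f$ itself as the potential. I would track the value $f(x)$ at the end of each sweep, writing $x^{(t)}$ for the global assignment after sweep $t$, with $x^{(0)}$ the initial assignment.

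\textbf{Monotonicity within a sweep.} The non-worsening acceptance rule replaces $x$ by $x'$ only when $f(x')\le f(x)$. An induction over the individual solver calls in a sweep therefore shows that $f$ is non-increasing along the whole sequence of intermediate assignments. In particular $f(x^{(t)})\le f(x^{(t-1)})$ for every $t$. Nothing about the solver is used here beyond the fact that its proposal is checked against the full objective before adoption; internal restarts or uphill moves are invisible to this step.

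\textbf{Integer decrease per productive sweep.} A productive sweep contains at least one accepted update with $f(x')<f(x)$. Every other call in that sweep is non-increasing by the previous step. Because $f$ is integer-valued, the strict drop is at least $1$, so $f(x^{(t)})\le f(x^{(t-1)})-1$.

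\textbf{Counting productive sweeps.} By non-negativity, $f(x^{(t)})\ge 0$ for all $t$. Telescoping over any run of sweeps, the number $P$ of productive sweeps satisfies
\[
  0\le f(x^{(\mathrm{end})})\le f(x^{(0)})-P,
\]
so $P\le M=f(x^{(0)})$. Non-productive sweeps never decrease $f$ strictly, and by the stopping rule the first such sweep halts the procedure. Hence at most $M$ productive sweeps occur before a sweep with no strict decrease, and the total is at most $M+1$ sweeps.

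One detail needs care: equal-cost acceptances may change $x$ without counting as progress. This does not break the count, because the bound is only on productive sweeps and each non-productive sweep terminates the process immediately. I would also note that each sweep is a finite sequence of $k$ calls, assuming each solver call returns, so every sweep itself finishes.

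\textbf{The certificate.} In the terminal sweep, no accepted update was strictly improving. A strictly improving candidate $x'$ would satisfy $f(x')<f(x)\le f(x)$ and so would have been accepted, contradicting the fact that the sweep was non-productive. Therefore no call returned a strictly improving candidate relative to the boundary values it received at that moment.

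The main obstacle is not mathematical but one of precision. The certificate must be phrased relative to the then-current boundary values, since tie-accepting updates earlier in the terminal sweep may move those boundaries. I will avoid claiming a fixed-point property with respect to the final assignment.
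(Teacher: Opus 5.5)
Your proposal is correct and follows the paper's proof essentially step for step: the objective is monotone under the non-worsening rule, integrality gives a drop of at least one per productive sweep, and non-negativity bounds the number of productive sweeps by $M$. Stopping at the first non-productive sweep then gives at most $M+1$ sweeps, and the certificate follows because any strictly improving candidate would have been accepted; your remark that each solver call must return is an assumption the paper leaves implicit.
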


\begin{proof}
\fzfix{The objective never increases, since only non-worsening updates are
accepted. Every productive sweep decreases the integer-valued objective by at
least~1; since the objective is non-negative and starts at $M=f(x^{(0)})$,
there are at most $M$ productive sweeps. Cycling stops at the first
non-productive sweep, so at most $M+1$ sweeps occur in total. In the
terminal sweep, no solver call returned a candidate with strictly lower
full-objective value: equal-cost candidates may have been accepted, worse
candidates were rejected, and no strict decrease occurred during the sweep,
which is exactly the stated property.}
\end{proof}

\begin{remark}
ADMM was designed for continuous convex optimization and does not in
general carry finite-convergence guarantees when adapted to discrete
problems, where it can oscillate indefinitely.
MBE's finite convergence is therefore a structural advantage for
integer-valued objectives.
In our runs (reproducible via \texttt{--sweep-count}), Stage~1 reached its
termination certificate in $1$--$9$ sweeps on the DIMACS instances and
within~$12$ sweeps on all but the largest generated instance; on
rand-500-0.03 ($k=39$) some seeds were still improving when the 15-sweep
experimental cap (Appendix~\ref{app:repro}) truncated cycling. The cap is in
every case far below the $M$ upper bound, and truncation is benign for
correctness: the objective is monotone throughout, and the correction passes
of Stages~2--3 operate on whatever Stage~1 delivers.
\fzfix{Two caveats delimit the certificate. First, it is relative to the
solver calls actually made: with stochastic or incomplete local solvers, a
non-productive sweep shows only that those calls found no strict improvement,
not that none exists. Second, equal-cost updates accepted during the terminal
sweep can move boundary values, so the terminal assignment is stable in
objective value but not necessarily a fixed point of the assignment itself;
the correction passes of Stages~2--3 exploit exactly such situations.}
\fzadd{For finite-domain real-valued objectives, such as Ising or QUBO
instances with real coefficients, \fzfix{the same non-worsening rule with
strict-progress termination still terminates, because the loop continues only
on a strict decrease and the number of distinct accepted objective values is
finite.}
However, the sharper $M=f(x^{(0)})$ descent bound is then replaced by the
number of distinct accepted objective values, or more coarsely by the finite
state-space size. Thus the integer theorem is the strong bound used for
conflict-count objectives, while the real-valued experiments test the same
monotone promotion mechanism without claiming that integer bound.}
\end{remark}

% ---------------------------------------------------------------------------
\subsection{Locality and the Delta Spectrum}

After Stage~1 convergence to $x_{\MBE}$, a pairwise objective decomposes as
\[
  f(z) \;=\; \sum_i I_i(z_i) + \sum_{i<j} B_{ij}(z_i,z_j),
\]
where $I_i\ge 0$ collects the pairwise terms internal to $F_i$ and
$B_{ij}\ge 0$ the terms coupling $F_i$ and $F_j$ (for conflict-count
objectives such as coloring, these are the violated edges within and across
fragments, but the decomposition is generic).
The \emph{delta spectrum} is $\Delta(F_i,F_j):=B_{ij}(x_i,x_j)$, the
current cross-fragment interaction cost for each pair.

For any union of fragments $U$, let $A_U(z_U;\,x_{\bar U})$ be the sum
of all terms in $f$ with at least one endpoint in $U$, with exterior
variables frozen at $x_{\bar U}$.
The \emph{dimer residual} for $U$ is
\[
  g_U(x) \;:=\; A_U(x_U;\,x_{\bar U}) - \min_{z_U} A_U(z_U;\,x_{\bar U}).
\]

\begin{proposition}[Locality of corrections]\label{prop:locality}
For any assignment $y$ agreeing with $x$ outside $U$,
\[
  f(x) - f(y) \;\le\; g_U(x),
\]
with equality when $y_U$ minimizes $A_U(\cdot;\,x_{\bar U})$.
In particular, $g_U(x)=0$ certifies that no move confined to $U$ can
improve~$f$.
\end{proposition}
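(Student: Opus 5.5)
The plan is to split $f$ into the part that a move inside $U$ can change and the part it cannot. For a union of fragments $U$ and an assignment $z$, write
\[
  f(z) \;=\; A_U(z_U;\,z_{\bar U}) + R_U(z_{\bar U}),
\]
where $R_U$ collects every term of the pairwise decomposition $\sum_i I_i+\sum_{i<j}B_{ij}$ (equivalently, every $\phi_\alpha$) with no endpoint in $U$. By construction, every term depending on a variable in $U$ lies in $A_U$. Hence $R_U$ depends only on $z_{\bar U}$.

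\textbf{Step 1: cancel the exterior.} Take $y$ agreeing with $x$ outside $U$, so $y_{\bar U}=x_{\bar U}$. Then $R_U(y_{\bar U})=R_U(x_{\bar U})$, and
\[
  f(x)-f(y) \;=\; A_U(x_U;\,x_{\bar U}) - A_U(y_U;\,x_{\bar U}).
\]
This identity is the heart of the proposition. The only care needed is that terms with one endpoint in $U$ and one outside are counted in $A_U$, with the exterior endpoint frozen at $x_{\bar U}$. For $y$ this is legitimate precisely because $y_{\bar U}=x_{\bar U}$.

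\textbf{Step 2: bound by the minimum.} Since $A_U(y_U;\,x_{\bar U})\ge\min_{z_U}A_U(z_U;\,x_{\bar U})$, the identity gives $f(x)-f(y)\le g_U(x)$. The minimum exists because the domains are finite. If $y_U$ attains that minimum, the inequality is an equality by the definition of $g_U$.

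\textbf{Step 3: the certificate.} If $g_U(x)=0$, then $f(x)-f(y)\le 0$ for every $y$ confined to $U$, so no such move strictly improves $f$.

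No step is a real obstacle. The only point to state explicitly is that $A_U$ contains \emph{all} terms touching $U$, including cross terms $B_{ij}$ with $i\in U$ and $j\notin U$. Under this definition the exterior remainder is genuinely independent of $z_U$. Non-negativity of $I_i$ and $B_{ij}$ is not needed for the argument.
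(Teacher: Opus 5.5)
Your proof is correct and matches the paper's argument: the paper also cancels the terms not involving $U$ to obtain $f(x)-f(y)=A_U(x_U)-A_U(y_U)$, and then bounds $A_U(y_U)$ below by $\min_{z_U}A_U(z_U;\,x_{\bar U})$. Your added remarks are accurate but only make explicit what the paper's one-line proof uses: cross terms touching $U$ go into $A_U$ with the exterior frozen, the minimum exists because the domains are finite, and non-negativity is not needed.
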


\begin{proof}
Since $y$ agrees with $x$ outside $U$, the terms not involving $U$ cancel:
$f(x)-f(y) = A_U(x_U) - A_U(y_U) \le A_U(x_U) - \min_{z_U}A_U(z_U) = g_U(x)$.
\end{proof}

\begin{remark}
  The dimer residual $g_U$ is the correct certificate: $g_U(x)=0$
  certifies that no joint re-optimization of $U$ can improve $f$ given
  the current exterior. The delta spectrum $\Delta(F_i,F_j)$ is a
  useful \emph{diagnostic} for identifying pairs with high boundary
  frustration, but $\Delta_{\mathrm{total}}=0$ does not imply $g_U=0$
  for all dimers: an intra-fragment improvement may be unlocked by
  changing a boundary node in the other fragment, as a counterexample
  demonstrates \fzrev{(Appendix~\ref{app:counterexamples},
    Construction~1)}.  The delta spectrum guides which pairs to
  re-solve first; the residual $g_U$ certifies whether re-solving
  helped.
\end{remark}

% ---------------------------------------------------------------------------
\subsection{Monotonicity}

\begin{theorem}\label{thm:monotonicity}
Let $x^{(p)}$ be the assignment obtained after applying all MBE corrections
up to order~$p$, with every proposed update accepted only when it does not
increase the full objective. Then
\[
  f(x^{(1)}) \;\ge\; f(x^{(2)}) \;\ge\; \cdots \;\ge\; f(x^{(k)}).
\]
\end{theorem}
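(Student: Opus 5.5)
The plan is to treat the whole correction hierarchy as one sequence of accepted updates and to show that $f$ is non-increasing along that sequence. The chain of inequalities in the statement then follows by reading off the value of $f$ at the checkpoints where each order finishes.

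\textbf{Step 1: index the updates.} I will write the execution of the promoted solver as a finite sequence of proposal events $e_1,e_2,\ldots,e_N$. These include the Stage~1 sweeps and, for each order $p=2,\ldots,k$, the solver calls on the selected unions of $p$ fragments. The assignment after event $e_t$ is denoted $y^{(t)}$. By definition, $x^{(p)}=y^{(t_p)}$, where $t_p$ is the index of the last event belonging to order at most $p$. The indices satisfy $t_1\le t_2\le\cdots\le t_k$. If some order selects no unions, for instance because the delta diagnostics flag nothing, then $t_p=t_{p-1}$ and the corresponding inequality holds with equality.

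\textbf{Step 2: one step does not worsen $f$.} At each event, the solver proposes a candidate $x'$ that agrees with $y^{(t-1)}$ outside the current fragment or union. The acceptance rule gives two cases:
\begin{itemize}
\item if $f(x')\le f(y^{(t-1)})$, then $y^{(t)}=x'$;
\item otherwise, $y^{(t)}=y^{(t-1)}$.
\end{itemize}
In either case $f(y^{(t)})\le f(y^{(t-1)})$. This holds no matter what the black-box solver does internally, including restarts or uphill moves, because only its returned candidate is tested against the full objective. The MBE truncation $f^{(p)}$ of Definition~\ref{def:mbe} is used only to select and organize the moves. The test itself is always made on $f$, so Step~2 never needs to compare $f^{(p)}$ with $f$.

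\textbf{Step 3: telescope.} Induction on $t$ gives $f(y^{(s)})\ge f(y^{(t)})$ whenever $s\le t$. Applying this with $s=t_{p}$ and $t=t_{p+1}$ gives $f(x^{(p)})\ge f(x^{(p+1)})$ for each $p$, and chaining these yields the statement. Permanent merging of fragments also fits into this argument. Re-partitioning changes only which later proposals are made; it does not change the current assignment. The restarted Stage~1 calls are therefore additional guarded events in the same sequence.

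\textbf{Main obstacle.} The difficulty is not an inequality but pinning down what ``$x^{(p)}$'' means, so that the orders are applied in sequence and each starts from the previous output. The danger is a reading in which order-$p$ corrections are computed independently from $x^{(1)}$ and then combined. Combining non-worsening moves on overlapping unions could increase $f$ through cross terms. I will therefore fix the sequential semantics explicitly, which is how Stage~2 is described. I will also note that when two unions are disjoint and share no term of $f$, their updates commute, so parallel execution of such updates is harmless.
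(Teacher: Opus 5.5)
Your proposal is correct and follows the paper's proof: the paper also argues that each order-$(p{+}1)$ pass starts from $x^{(p)}$, that every accepted update is non-worsening, and that every rejected update leaves the assignment unchanged, so $f$ cannot increase from one order to the next. Your explicit event indexing, your remark that acceptance is always tested on $f$ rather than $f^{(p)}$, and your insistence on sequential semantics make precise what the paper's short argument assumes.
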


\begin{proof}
Each order-$(p{+}1)$ correction starts from $x^{(p)}$ and proposes updates
on unions of $p{+}1$ fragments. By the acceptance rule, each accepted update
has objective value at most the current objective value, and each rejected
update leaves the assignment unchanged. Therefore the objective cannot
increase when moving from order~$p$ to order~$p{+}1$.
\end{proof}

\begin{remark}
This is an internal monotonicity statement about one MBE run: higher-order
corrections cannot worsen the assignment produced by lower-order corrections.
It does not assert that MBE must beat an independent flat run of the same
heuristic. The comparison with flat solvers is an empirical question studied
in \cref{sec:experiments}.
\end{remark}

% ---------------------------------------------------------------------------
\subsection{Exact Truncation}

\begin{theorem}\label{thm:truncation}
Under \cref{def:mbe}, if every term in~$f$ depends on variables from at
most~$r$ fragments, then $f^{(p)}\equiv f$ for all $p\ge r$.
\end{theorem}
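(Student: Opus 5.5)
The plan is to compute each cluster term $C_S$ explicitly, term by term, and show that $C_S$ collects exactly the terms $\phi_\alpha$ whose fragment support equals $S$. The truncation statement then follows because no term has support of size greater than $r$.

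\emph{Step 1: expand $C_S$.} Substituting the definition of $f_T$ into \cref{def:mbe} and exchanging the order of summation gives
\[
  C_S \;=\; \sum_{\alpha:\,\sigma(\alpha)\subseteq S}\phi_\alpha
  \sum_{T:\,\sigma(\alpha)\subseteq T\subseteq S}(-1)^{\abs{S}-\abs{T}},
\]
where $\sigma(\alpha):=\mathrm{supp}_{\mathcal F}(\alpha)$. Here term $\alpha$ appears in $f_T$ exactly when $\sigma(\alpha)\subseteq T$.

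\emph{Step 2: evaluate the inner sign sum.} Put $T=\sigma(\alpha)\cup R$ with $R\subseteq S\setminus\sigma(\alpha)$. The inner sum becomes $\sum_{R}(-1)^{m-\abs{R}}=(1-1)^m$, where $m=\abs{S\setminus\sigma(\alpha)}$. This equals $1$ if $\sigma(\alpha)=S$ and $0$ otherwise. Hence
\[
  C_S=\sum_{\alpha:\,\sigma(\alpha)=S}\phi_\alpha .
\]
This is the standard M\"obius inversion on the Boolean lattice. It is the one step where care is needed: the binomial cancellation acts per term, which is the point made in the Remark after \cref{def:mbe}.

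\emph{Step 3: sum over $S$.} Every term $\alpha$ with $\sigma(\alpha)\ne\emptyset$ has a unique support $S=\sigma(\alpha)$, so it is counted in exactly one cluster term $C_S$. Therefore
\[
  f^{(p)}=\sum_{\alpha:\,1\le\abs{\sigma(\alpha)}\le p}\phi_\alpha .
\]
The hypothesis gives $\abs{\sigma(\alpha)}\le r\le p$ for all $\alpha$, so every term with nonempty support is included and $f^{(p)}=f$. The same computation at $p=k$ yields the claim $f^{(k)}\equiv f$ in \cref{def:mbe}.

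The main obstacle is bookkeeping for terms with empty support, namely constant terms. These lie in $C_\emptyset$, which the sum $1\le\abs{S}$ excludes. I would handle this as the paper's convention $f_\emptyset=0$ suggests: assume there is no constant term, or absorb any constant into some $\phi_\alpha$ with nonempty support. Either way the identity holds exactly.
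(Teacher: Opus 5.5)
Your proof is correct and takes essentially the same route as the paper. Both arguments reduce to the per-term binomial identity $\sum_{T:\,\sigma(\alpha)\subseteq T\subseteq S}(-1)^{\abs{S}-\abs{T}}=[S=\sigma(\alpha)]$: the paper computes each term's total coefficient in $f^{(p)}$ directly, whereas you first identify $C_S$ as the sum of the terms whose support is exactly $S$. Your explicit treatment of empty-support (constant) terms, which the paper's proof passes over, is extra care the paper does not take.
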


\begin{proof}
Consider one objective term $\phi_\alpha$ with fragment support
$R=\mathrm{supp}_{\mathcal F}(\alpha)$. This term contributes to $f_T$ if and
only if $R\subseteq T$. Its coefficient in $f^{(p)}$ is
\[
  \sum_{\substack{S:\,\abs{S}\le p}}
  \sum_{\substack{T:\,R\subseteq T\subseteq S}}
  (-1)^{\abs{S}-\abs{T}} .
\]
For a fixed $S$, the inner sum is zero unless $S=R$, by the binomial
identity applied to $S\setminus R$; when $S=R$, the inner sum is~1. Thus
$\phi_\alpha$ appears with coefficient~1 exactly when $\abs{R}\le p$, and
with coefficient~0 otherwise. Therefore all terms whose fragment support has
size at most~$r$ are included exactly once in every $f^{(p)}$ with $p\ge r$.
\end{proof}

\begin{corollary}\label{cor:truncation}
Graph coloring, MAX-CUT, QUBO: $r=2$ (pairwise), so $f^{(2)}\equiv f$.
MAX-SAT with clause width~$w$: $r=w$.
\end{corollary}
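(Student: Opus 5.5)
The corollary is an immediate instance of \cref{thm:truncation}. The plan is to exhibit, for each problem class, the additive decomposition $f=\sum_\alpha \phi_\alpha$ and bound the fragment support of every term. The theorem then gives $f^{(p)}\equiv f$ for all $p\ge r$. No further Möbius-inversion calculation is needed, because the coefficient computation in the proof of \cref{thm:truncation} already shows that each term with $\abs{\mathrm{supp}_{\mathcal F}(\alpha)}\le p$ is counted exactly once.

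First, for graph coloring, take one term per edge, $\phi_{uv}(x_u,x_v)=[x_u=x_v]$. Each term depends on the variables of two vertices, which lie in at most two fragments, since the fragments partition $[n]$. So $\abs{\mathrm{supp}_{\mathcal F}(uv)}\le 2$ and $r=2$. MAX-CUT works the same way with $\phi_{uv}=w_{uv}[x_u=x_v]$ (cut violations), or equivalently with the negated cut indicator, and is again pairwise.

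For QUBO, $f(x)=\sum_{i\le j}Q_{ij}x_ix_j$, take the diagonal terms as unary terms (support size $1$) and the off-diagonal terms as pair terms (support size at most $2$). A constant offset, if present, has empty support and is absorbed into $f_\emptyset$. One should check that \cref{def:mbe} handles this case. Its contribution to $C_S$ for nonempty $S$ vanishes by the binomial identity. It is therefore dropped from $f^{(p)}$, and the convention $f_\emptyset=0$ in the paper excludes it. I would state that the identity holds up to the constant, or assume that no constant term is present, as the paper does.

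For MAX-SAT with clause width $w$, take one term per clause, namely the weighted indicator that the clause is violated. A clause involves at most $w$ variables, so it touches at most $w$ fragments, and $r=w$. This requires the clause to be stored as a single term. If it were expanded into a polynomial, the monomials would still have support at most $w$, so the bound is robust.

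The only delicate point is the subtlety about constant and empty-support terms. Everything else is a direct substitution into \cref{thm:truncation}. One should also note that $r$ is an upper bound that is achieved in the worst case: for a particular partition, $f^{(p)}\equiv f$ may already hold for smaller $p$, for example when a clause happens to lie inside a single fragment.
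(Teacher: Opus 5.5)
Your proposal is correct and follows the argument the paper leaves implicit: the corollary is \cref{thm:truncation} instantiated with one term per edge, QUBO entry, or clause, and each such term touches at most $2$ (respectively $w$) fragments because the fragments partition the variables. Your caveat about an empty-support constant is a genuine refinement rather than a gap: such a term gets coefficient $0$ in every $f^{(p)}$, so the identity holds only up to that constant, which is why the paper assumes no constant term. The same caveat applies to your polynomial-expansion aside for MAX-SAT, since expanding a violated-clause indicator such as $\prod_i(1-x_i)$ produces a constant monomial.
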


\begin{remark}
Exact truncation is a statement about \emph{evaluation}: the order-2 sum
exactly represents the objective value for any fixed assignment.
It does not imply that an algorithm which re-solves dimers sequentially
reaches the global optimum; that stronger claim requires message passing
along an acyclic intersection graph, as the next result establishes.
\end{remark}

% ---------------------------------------------------------------------------
\subsection{The Evaluation--Optimization Gap}

\begin{definition}\label{def:intersection}
The \emph{fragment-intersection graph} $I(G,\mathcal{F})$ has vertices~$[k]$
and an edge $(i,j)$ iff some edge of~$G$ joins $F_i$ to~$F_j$.
\end{definition}

\begin{theorem}\label{thm:gap}
Let $f$ be a pairwise objective.

\textup{(a)} If $I(G,\mathcal{F})$ is a forest, exact minimization is
achieved by order-2 message passing along the forest: the message from
fragment~$i$ to its parent~$j$ is the response function
$m_{i\to j}(x_j)=\min_{x_i}[I_i(x_i)+B_{ij}(x_i,x_j)
+\sum_{\ell\in\mathrm{ch}(i)}m_{\ell\to i}(x_i)]$,
which retains all of~$i$'s optimal responses rather than committing to a
single state. Minimizing at the root and backtracking recovers a globally
optimal assignment.

\textup{(b)} Independently committed dimer re-optimizations (the Stage~2
dimer corrections) are \emph{not} globally exact in general, even when
$I(G,\mathcal{F})$ is a tree: a fragment shared by multiple dimers can
require a cooperative move invisible to every individual dimer residual.

\textup{(c)} If $I(G,\mathcal{F})$ has a cycle of length $\ge 3$, there
exist instances where independent dimer optimizations produce globally
inconsistent boundary assignments.
\end{theorem}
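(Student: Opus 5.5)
Part (a) is a dynamic-programming argument on the forest $I(G,\mathcal{F})$, and parts (b) and (c) each need only a small explicit instance.

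\emph{Part (a).} First, because $f$ is pairwise and an edge of $I(G,\mathcal{F})$ exists exactly when some $B_{ij}$ is nonzero, we can write
\[
  f(z)=\sum_i I_i(z_i)+\sum_{(i,j)\in E(I)}B_{ij}(z_i,z_j).
\]
We then treat each connected component separately; the components are variable-disjoint, so their minima simply add. In each component we pick a root and orient the edges toward it. We then prove, by induction on the height of the subtree $T_i$ rooted at $i$, that
\[
  m_{i\to j}(x_j)=\min_{z_{T_i}}\Bigl[\sum_{\ell\in T_i}I_\ell(z_\ell)+\sum_{(\ell,\ell')\in E(T_i)}B_{\ell\ell'}+B_{ij}(z_i,x_j)\Bigr].
\]
The inductive step uses one fact: once $x_i$ is fixed, the child subtrees share no variables and no terms. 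The minimum over a sum of independent pieces therefore splits into the sum of the minima, which is exactly $\sum_{\ell\in\mathrm{ch}(i)}m_{\ell\to i}(x_i)$. The only thing to check is that each term $I_\ell$ and $B_{\ell\ell'}$ is counted exactly once, and acyclicity guarantees this. At the root $r$, the quantity $\min_{x_r}[I_r+\sum_\ell m_{\ell\to r}]$ equals $\min f$. Backtracking, choosing each $x_i$ as an argmin given its parent's value, attains every one of these minima simultaneously, so the resulting assignment is globally optimal.

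\emph{Part (b).} Here we build a tree instance, a path of three fragments $F_1-F_2-F_3$, and a current assignment with two properties. First, each dimer residual is zero: $g_{F_1\cup F_2}=0$ and $g_{F_2\cup F_3}=0$, so by \cref{prop:locality} no dimer correction changes anything. Second, a joint change of all three fragments strictly lowers $f$.

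For example, let $F_2$ be a single variable whose change costs $+1$ internally, say through a unary or internal term. Changing $F_2$ together with $F_1$ alone gains only $1$, and likewise with $F_3$ alone. Changing all three gains $2-1>0$. Concretely, suppose flipping $F_2$ raises $I_2$ by $2$, and each neighbor $F_i$ (with $i\in\{1,3\}$) has a boundary conflict with $F_2$ worth $1$ that can be removed only if $F_2$ flips. Then each dimer sees a net change of $2-1-0\ge 0$ and so has zero residual, while the trimer sees $2-2$. To make the trimer gain strict, we instead set the internal cost to $1.5$, or, for integer costs, we scale the conflict weights to $2$ and the internal cost to $3$.

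I expect this construction to be the main obstacle. We must make sure the dimer residual really is zero; that is, no choice of $z_{F_1\cup F_2}$ with $F_3$ frozen does better. For this the conflict with $F_3$ has to penalize the flipped state of $F_2$. In the scaled version, flipping $F_2$ with $F_3$ frozen creates a new conflict of $2$, so the dimer's net change is $+3-2+2>0$.

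\emph{Part (c).} We use a triangle of fragments, each a single binary variable, with antiferromagnetic couplings, i.e.\ MAX-CUT on an odd cycle. We run the dimer optimizations independently from the same starting point, each solving its pair while ignoring the third edge. Each dimer's optimal boundary values (for instance, $(0,1)$ on $\{1,2\}$, $(1,0)$ on $\{2,3\}$, and $(0,1)$ on $\{3,1\}$ under suitable tie-breaking or weights) disagree on a shared fragment. Hence the dimer solutions cannot be merged into a single assignment.

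To make the inconsistency forced rather than a matter of tie-breaking, we add unary biases on each fragment so that each pairwise subproblem has a unique optimum. For cycles longer than $3$, we embed this triangle by padding with zero-weight terms.
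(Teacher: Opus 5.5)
Your part (a) is the standard tree dynamic program, which matches the paper's sketched argument and is fine.

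\emph{Part (b).} The idea is right: the paper also uses a small tree whose dimer residuals all vanish while a three-fragment move strictly improves. Its Construction~2 is the star $K_{1,3}$ with $f=2[c=0]+\sum_i[c\neq u_i]$ at the all-zero state, and a path $F_1-F_2-F_3$ would serve equally well. However, the instance you actually specify does not work. Taken literally, ``a boundary conflict with $F_2$ that can be removed only if $F_2$ flips'' makes each $B_{i2}$ depend on $x_2$ alone. Then the three-fragment move gains exactly what the single flip of $F_2$ gains, namely $2w-c$. As soon as you choose $c<2w$ to make the trimer strict ($c=1.5,w=1$ or $c=3,w=2$), flipping $F_2$ alone strictly improves, so $g_{F_1\cup F_2}>0$ and the counterexample collapses. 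The cooperative effect needs each boundary gain to be released only by the joint state. Your ``$2-1-0$'' and ``$+3-2+2$'' computations also presuppose two different $F_2$--$F_3$ tables, neither of which is written down. Fix one explicit $f$ and check every state of each dimer. For instance, $f=3x_2+2(1-x_1x_2)+2(1-x_2x_3)$ at $x=(0,0,0)$ has $f=4$. With $x_3=0$ frozen, the dimer $\{F_1,F_2\}$ takes the values $4,4,7,5$ on $(x_1,x_2)=(0,0),(1,0),(0,1),(1,1)$, and symmetrically for $\{F_2,F_3\}$. The non-adjacent union $\{F_1,F_3\}$ is constant at $4$. Yet $f(1,1,1)=3$. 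Alternatively, use the path version $f=[c=0]+[c\neq u_1]+[c\neq u_2]$ of the paper's example.

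\emph{Part (c).} Your triangle is correct and is the paper's smallest case, but you miss the observation that makes it a proof, and your extension to longer cycles fails. The inconsistency is not a tie-breaking artifact, so the unary biases are unnecessary. Every choice of dimer optima satisfies $x_i\neq x_j$ on its own edge, and if the three choices agreed on shared fragments they would assemble into a proper 2-coloring of $K_3$. For $L\ge 4$, a chordless cycle $C_L$ contains no triangle to embed. Putting zero weight on the extra edges leaves the active constraints on a path, which is always consistently satisfiable, so zero-weight padding destroys exactly the frustration you need. The paper instead puts a constraint $x_{i+1}=x_i\oplus b_i$ on every edge of the cycle with $\bigoplus_i b_i=1$, that is, an odd number of antiferromagnetic bonds. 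For even $L$ the all-antiferromagnetic cycle is bipartite and hence satisfiable, so at least one bond must be ferromagnetic. Each pair constraint is satisfiable on its own, while summing around the cycle gives $0=1$. Equivalently, you may subdivide an edge of your triangle using equality (ferromagnetic) bonds, which preserves the odd parity, but not using zero-weight bonds.
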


\begin{proof}[Proof sketch of (a)]
Standard tree dynamic programming: each message $m_{i\to j}(\cdot)$ accounts
for the optimal cost of the subtree rooted at~$i$ as a function of the
parent's state. The running-intersection structure of the forest ensures
every edge of~$G$ appears in exactly one message computation.

Proof of (b): see the star-graph counterexample \fzrev{in
Appendix~\ref{app:counterexamples} (Construction~2)}, where three dimers
share a center fragment, every dimer residual vanishes, yet an order-3
cooperative move strictly improves the objective.

Proof of (c): \fzfix{for a cycle $C_L$ ($L\ge 3$) in the
fragment-intersection graph, place a binary variable $x_i$ on each fragment
of the cycle and let the edge $(i,i{+}1)$ carry the constraint
$x_{i+1}=x_i\oplus b_i$ with fixed bits satisfying
$\bigoplus_{i} b_i = 1$. Each dimer constraint is individually satisfiable,
so every independent dimer optimization can commit to a locally valid
boundary pair; but summing the constraints around the cycle gives
$0=\bigoplus_i b_i=1$, so no global assignment is consistent with all
committed pairs. The smallest case, $L=3$ with all $b_i=1$, is $G=K_3$ with
three singleton fragments and $q=2$: each edge admits a valid 2-coloring,
while $K_3$ is not 2-colorable.}
\end{proof}

\begin{corollary}\label{cor:treewidth}
If $\mathrm{treewidth}(I(G,\mathcal{F}))=1$ (forest), exact optimization
is achievable by order-2 message passing.
More generally, a junction-tree decomposition of~$I$ with bags of size
$\tau+1$ gives exact optimization \fzfix{at a cost proportional to the total
fragment state space of the largest bag, $\max_t \prod_{i\in B_t}
|\mathcal{X}_{F_i}|$, which is exponential in the number of original variables
covered by the bag, not merely in the graph-theoretic bag size~$\tau+1$.}
\end{corollary}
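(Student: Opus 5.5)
The first claim is the case $\tau=1$ of the second, so I would prove the general junction-tree statement and read off the forest case from \cref{thm:gap}(a). Take a tree decomposition $(T,\{B_t\})$ of $I(G,\mathcal F)$ with bags $B_t\subseteq[k]$ of size at most $\tau+1$. Then group fragments into ``super-fragments'' indexed by bags, with the super-variable of bag $t$ being the joint assignment $z_t=(x_{F_i})_{i\in B_t}$, ranging over $\prod_{i\in B_t}|\mathcal X_{F_i}|$ states. Assign each internal term $I_i$ to one bag containing $i$. Assign each coupling term $B_{ij}$ to one bag containing both $i$ and $j$; such a bag exists because $(i,j)$ is an edge of $I$ and the tree decomposition covers every edge. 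This gives the decomposition $f=\sum_t \psi_t(z_t)$, where each term is counted exactly once, as in the bookkeeping of \cref{thm:truncation}.

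Next I would run the message recursion of \cref{thm:gap}(a) on $T$. Root $T$ and define
\[
m_{t\to s}(z_{B_t\cap B_s})=\min_{z_t\,\text{consistent}}\Bigl[\psi_t(z_t)+\sum_{u\in\mathrm{ch}(t)}m_{u\to t}(z_{B_u\cap B_t})\Bigr],
\]
where the minimum runs over bag states agreeing with the separator argument. The correctness argument is the standard induction from the leaves. The running-intersection property ensures that a fragment appearing in both the subtree of $t$ and outside it must lie in the separator $B_t\cap B_s$. So once the separator is fixed, the minimization over the subtree's remaining fragments decouples from the rest of the tree. Hence $m_{t\to s}$ equals the optimal cost of all terms assigned inside the subtree, as a function of the separator. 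Minimizing at the root and backtracking the argmins then yields a consistent global minimizer. Consistency holds because every fragment's bags form a connected subtree, so its value is chosen once and inherited. For $\tau=1$ with $I$ a forest, the bags are the edges of $I$ and the separators are single fragments, which recovers exactly the messages $m_{i\to j}(x_j)$ of \cref{thm:gap}(a).

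For the cost bound, computing each message requires enumerating the states of $B_t$, at most $\prod_{i\in B_t}|\mathcal X_{F_i}|$ of them, and doing work polynomial in the number of children per state. Summed over the $O(k)$ bags, the total is proportional to $\max_t\prod_{i\in B_t}|\mathcal X_{F_i}|$ up to factors linear in $k$. Since $|\mathcal X_{F_i}|=\prod_{v\in F_i}|X_v|$, this quantity is exponential in $\sum_{i\in B_t}|F_i|$, the number of original variables covered by the bag, and not merely in $\tau+1$.

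The step I expect to need the most care is the decoupling claim: that the subtree minimum depends on the outside only through the separator. This needs both edge coverage, so that no coupling term $B_{ij}$ straddles a separator without both endpoints in some bag, and running intersection. I would state it as a lemma proved by induction on subtree height. The remaining steps are standard bookkeeping.
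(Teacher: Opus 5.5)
Your proposal is correct and takes essentially the paper's route. The forest case is read off from \cref{thm:gap}(a), and the general case is the standard junction-tree dynamic program of \cref{prop:jt}: assign terms to covering bags, pass separator messages justified by running intersection, then minimize at the root and backtrack, using joint fragment states as the bag variables so that table sizes, and hence cost, scale with $\prod_{i\in B_t}|\mathcal{X}_{F_i}|$. The only step you gloss is the $O(k)$ bound on the number of bags; it holds after contracting any bag contained in a neighbor, and in any case it affects only the factor multiplying the largest bag's state space.
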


\begin{remark}\label{rem:observations}
This explains two phenomena visible in our experiments:
grid graphs certify at order~1: contiguous partitioning yields a
path-like (acyclic) intersection graph, \fzfix{and empirically the committed
Stage~1 cycle already reaches a globally optimal assignment on these
instances (the exactness theorem itself covers response-function message
passing, not committed cycling);}
dense random graphs have high delta values because their intersection
graphs contain many cycles, requiring higher-order corrections.
\end{remark}

% ---------------------------------------------------------------------------
\subsection{Relation to Junction Trees}

The acyclic message-passing result above is the disjoint-fragment analogue of
junction-tree dynamic programming. Junction trees usually use overlapping
bags rather than disjoint fragments, so they are not literally the same
object as the fragment partition used by the MBE heuristic. The connection is
that both methods become exact when the coordination graph is acyclic and the
messages retain response functions rather than committing prematurely to one
local assignment. \fzfix{The cost caveat applies equally to both: the message
tables range over full fragment-boundary states, so exact message passing is
exponential in the per-fragment state size, not merely in the number of
fragments or the treewidth of the fragment-intersection graph.}

\begin{proposition}\label{prop:jt}
Let $(T,\{B_t\}_{t\in V(T)})$ be a tree decomposition of a graph~$G$, and
let $f$ be a pairwise objective on~$G$. The standard junction-tree dynamic
program on the bags~$B_t$ computes a globally optimal assignment in time
exponential in the maximum bag size.
\end{proposition}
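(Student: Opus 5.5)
We prove \cref{prop:jt} by the standard junction-tree dynamic program. Throughout, $V(G)=[n]$, $f(x)=\sum_{v}\phi_v(x_v)+\sum_{(u,v)\in E}\phi_{uv}(x_u,x_v)$, and $d=\max_v|X_v|$.

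\textbf{Step 1: assign every term to a bag.} By the edge-covering property of a tree decomposition, every edge $(u,v)\in E$ lies in some bag. We fix one such bag $\beta(u,v)$ and attach $\phi_{uv}$ to it. Each unary term $\phi_v$ is attached to one bag containing $v$. The local potential of bag $t$ is $\psi_t(x_{B_t})$, the sum of the terms attached to $t$. Then $f=\sum_t\psi_t$, and each term of $f$ is counted exactly once.

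\textbf{Step 2: define the messages.} Root $T$ at some node $r$. For a non-root node $t$ with parent $s$, define
\[
m_{t\to s}(x_{B_t\cap B_s})=\min_{x_{B_t\setminus B_s}}\Bigl[\psi_t(x_{B_t})+\sum_{c\in\mathrm{ch}(t)}m_{c\to t}(x_{B_c\cap B_t})\Bigr].
\]
We compute these bottom-up. At the root we minimize $\psi_r+\sum_c m_{c\to r}$ over $x_{B_r}$. We then backtrack downward, storing an argmin table at each node, to obtain a full assignment $x^*$.

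\textbf{Step 3: correctness.} This is the main step. Let $T_t$ be the subtree rooted at $t$, and let $V_t=\bigcup_{u\in T_t}B_u$. We claim by induction on height that
\[
m_{t\to s}(x_{B_t\cap B_s})=\min\sum_{u\in T_t}\psi_u,
\]
where the minimum ranges over all assignments to $V_t\setminus B_s$ with $x_{B_t\cap B_s}$ held fixed. The induction rests on the running-intersection property: every bag containing a vertex $v$ forms a connected subtree of $T$. This gives two facts.
\begin{enumerate}[label=(\roman*)]
\item If $c$ is a child of $t$ and $v\in V_c$ also appears outside $T_c$, then $v\in B_c\cap B_t$. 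Hence the variables eliminated inside different child subtrees are disjoint from each other and from $B_t$.
\item Every variable of $V_t\setminus B_s$ is eliminated exactly once, so each child message is a function only of $x_{B_c\cap B_t}$.
\end{enumerate}
With these facts, the minimum over $V_t\setminus B_s$ splits: first minimize over $B_t\setminus B_s$, and inside that, minimize independently over each $V_c\setminus B_t$. Applying the induction hypothesis to each child gives the claim. At the root, this yields $\min_x f(x)$. Backtracking with the stored argmins is consistent because shared variables are fixed by the separator $B_c\cap B_t$, so the assembled $x^*$ attains the minimum.

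\textbf{Step 4: running time.} Computing one message enumerates assignments to $B_t$, which costs at most $d^{|B_t|}$ times the local evaluation cost. The tree has $|V(T)|$ nodes, so the total time is $O(|V(T)|\,d^{\max_t|B_t|}\cdot\mathrm{poly}(n))$. This is exponential in the maximum bag size. Since $|V(T)|$ can be taken to be $O(n)$, the bound is polynomial otherwise.

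The main obstacle is Step 3, specifically making the separator and disjointness argument rigorous from running intersection. That is the one place where the tree-decomposition axioms are used essentially.
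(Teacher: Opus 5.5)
Your proposal is correct and follows the same route as the paper. You attach each unary and edge term to a covering bag, pass separator-indexed min-messages toward a root, use running intersection so each subtree meets the rest only through $B_t\cap B_s$, and recover an optimum by root minimization and backtracking. Where the paper's sketch is brief, you spell out the height induction identifying $m_{t\to s}$ with the subtree minimum (via disjointness of the sets $V_c\setminus B_t$) and the $O(|V(T)|\,d^{\max_t|B_t|})$ cost count.
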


\begin{proof}
Assign each vertex and edge term of the pairwise objective to a bag that
contains its variables, which exists by the definition of tree decomposition.
For an oriented edge $t\to u$ of the tree decomposition, the message from
$t$ to $u$ is a function of the separator variables $B_t\cap B_u$ and is
obtained by minimizing over the variables in the component of
$T\setminus\{(t,u)\}$ containing~$t$, with the separator held fixed. The
running-intersection property ensures that these subproblems interact with
the rest of the graph only through the separator. A root minimization followed
by backtracking therefore recovers a globally optimal assignment.
\end{proof}

\begin{remark}
Thus junction-tree elimination is best viewed as the exact overlapping-bag
counterpart of the acyclic MBE message-passing case in \cref{thm:gap}(a).
The practical MBE algorithm studied here instead uses disjoint fragments,
black-box local solvers, and higher-order correction passes for non-tree
fragment interaction structures.
\end{remark}

% ===========================================================================
% ===========================================================================
\section{Computational Experiments}
\label{sec:experiments}

We test MBE on graph coloring ($q=3$--$15$) and MAX-CUT ($q=2$), with a
facility-location generality check in \cref{subsec:ufl}. All reported
objective values are conflict counts, so lower is better: monochromatic
edges for coloring, and uncut edges for MAX-CUT using the same
infrastructure with $q=2$. The primary comparisons use
TabuCol~\cite{hertz1987tabu}; solver-swap experiments replace the
fragment-level solver with SA or DSATUR while the pipeline stays fixed.
Tables report 10-seed averages, multi-start flat baselines, and equal-time
comparisons. In the \emph{sig} column, \emph{cert} means every seed reached
zero conflicts, which is globally optimal for these non-negative
objectives; the star markers ${}^{*}$, ${}^{**}$, and ${}^{***}$ report
two-sided paired $t$-tests at $p<0.05$, $p<0.01$, and $p<0.001$, where $p$
is the probability that seed-to-seed variation alone would produce a
difference at least this large.

For each comparison, the scientifically relevant control is not ``MBE
versus some other tuned solver'' but ``a flat solver versus the same solver
promoted by MBE under a comparable budget''. The tables should therefore be
read as wrapper-ablation tests: same instance, same initial assignment and
seeds, same local solver, and objective values always evaluated on the full
instance. The section runs from the controlled setting to the stress tests.
Multi-seed coloring, DIMACS, and MAX-CUT results establish the basic effect;
the sensitivity, equal-time, and multi-start subsections test whether it can
be explained away by parameter choice, CPU time, or restarts; the
solver-swap and partition experiments locate the boundary of the effect; the
facility-location study leaves the conflict-count setting altogether; and
the closing subsections reuse the same fragment data as a quantum resource
diagnostic.

% ---------------------------------------------------------------------------
\subsection{Implementation Structure}
\label{subsec:implementation}

All experiments run on a single Haskell implementation (built with GHC
via \texttt{cabal}; \cref{app:repro} records versions and hardware)
whose module structure follows the same separation as the paper. The
first layer, \texttt{Abstract.Configuration}, is the problem
specification: the set of assignments, the objective function, and the
operation that restricts attention to a chosen
fragment. \texttt{Problems.*} instantiates it per domain: for graph
coloring an assignment is a coloring and the objective counts
monochromatic edges; MAX-CUT reuses the same representation with two
colors and uncut edges as the objective; facility location supplies
the real-valued covering objective of \cref{subsec:ufl}. The second
layer, \texttt{Solvers.*}, is the restricted solver: it receives the
current global assignment, a set of variables it may change, and a
search budget, and returns a candidate assignment; TabuCol, SA, and
DSATUR all enter through this one interface. The third layer,
\texttt{Promotion.*}, is the MBE wrapper itself: the keep-if-not-worse
fragment sweeps (\texttt{GaussSeidel}), the delta-spectrum diagnostics
and dimer/trimer correction passes, and the merging of persistently
conflicting pairs (\texttt{Corrections},
\texttt{ManyBodyExpansion}). The benchmark drivers vary only the graph
family, color count, partition rule, seed, solver adapter, and
budget. Every table in this section is generated from seeded run logs
by the build scripts; the quantum diagnostics of
\cref{subsec:quantum} post-process exported fragment data with a
PennyLane-based simulation script.

% ---------------------------------------------------------------------------
\subsection{Graph Coloring: Multi-Seed Results}

We begin with a controlled graph-coloring comparison at $q=3$. This
experiment asks whether wrapping the same TabuCol routine in MBE changes the
quality of the final assignment. The instances include two grids, where a
zero-conflict 3-coloring is easy and serves as a sanity check, two queen
graphs with much denser local structure, one Mycielski graph, and a random
graph family whose size ranges from $50$ to $500$ vertices.

% Static copy of the pipeline-generated fragment; numbers verified
% identical to the seeded-log output by both authors. Regenerate via make.
\begin{table}[ht]
\centering
\caption{MBE vs.\ flat TabuCol, $q=3$ colors, 10 seeds.}
\label{tab:coloring}
\small
\begin{tabular}{@{}lrrr ccc c@{}}
\toprule
Instance & $N$ & $|E|$ & $k$
  & MBE (mean$\pm$std) & flat (mean$\pm$std) & $\Delta$ (mean$\pm$std) & sig \\
\midrule
grid-$5\times5$   & 25  & 40   & 5  & $0.0\pm0.0$   & $0.0\pm0.0$   & $+0.0\pm0.0$   & cert \\
grid-$8\times8$   & 64  & 112  & 8  & $0.0\pm0.0$   & $0.0\pm0.0$   & $+0.0\pm0.0$   & cert \\
queen-$5\times5$  & 25  & 160  & 5  & $30.0\pm0.8$  & $36.1\pm1.1$  & $+6.1\pm1.5$   & $^{***}$ \\
queen-$6\times6$  & 36  & 290  & 6  & $59.2\pm1.4$  & $70.6\pm2.2$  & $+11.4\pm2.1$  & $^{***}$ \\
myciel-5          & 23  & 71   & 5  & $4.0\pm0.0$   & $5.3\pm0.5$   & $+1.3\pm0.5$   & $^{***}$ \\
rand-50-0.1       & 50  & 132  & 8  & $4.1\pm0.3$   & $10.3\pm0.9$  & $+6.2\pm1.1$   & $^{***}$ \\
rand-100-0.1      & 100 & 488  & 12 & $60.8\pm3.6$  & $87.9\pm1.1$  & $+27.1\pm3.6$  & $^{***}$ \\
rand-200-0.06     & 200 & 1187 & 20 & $169.7\pm6.9$ & $224.4\pm1.3$ & $+54.7\pm6.6$  & $^{***}$ \\
rand-500-0.03     & 500 & 3719 & 39 & $577.3\pm9.3$ & $750.1\pm2.0$ & $+172.8\pm9.1$ & $^{***}$ \\
\bottomrule
\end{tabular}
\end{table}

Both methods find zero-conflict grid colorings, so those rows mainly check
that the wrapper preserves an already easy instance. The nontrivial cases
show a consistent reduction in conflicts. On the queen graphs, MBE reduces
the mean objective by $6.1$ and $11.4$ conflicts. On the random coloring
family, the absolute gain grows with instance size: $+6.2$ conflicts at
$n=50$, $+27.1$ at $n=100$, $+54.7$ at $n=200$, and $+172.8$ at $n=500$. The
standard deviations are small relative to these gains in the larger random
instances, so the improvement is consistent across seeds.

% ---------------------------------------------------------------------------
\subsection{Fragment-Count Sensitivity}

The fragment count $k$ is the main structural parameter in the wrapped
solver. It controls the compromise between local search freedom and boundary
coordination. With very few fragments, each local subproblem remains large,
and the solver receives little benefit from splitting. With too many
fragments, each subproblem becomes easy and many interactions move to
fragment boundaries, where higher-order passes revisit them.

The sensitivity table fixes the instance to rand-100-0.1 and varies only
$k$. The columns O1, O2, and O3 are the objectives after Stage~1
self-consistency, dimer (order-2) corrections, and trimer (order-3)
corrections, respectively. D1 and D2 are the total positive dimer delta
values after O1 and O2. The final $\Delta$ column again reports improvement
over the flat run.

% Static copy of the pipeline-generated fragment; numbers verified
% identical to the seeded-log output by both authors. Regenerate via make.
\begin{table}[ht]
\centering
\caption{Fragment count~$k$ sensitivity on rand-100-0.1. Flat TabuCol gives
87 conflicts in this run.}
\label{tab:ksens}
\small
\begin{tabular}{@{}r rrr rr r@{}}
\toprule
$k$ & O1 & O2 & O3 & D1 & D2 & $\Delta$ \\
\midrule
2  & 75 & 75 & 75 & 33 & 33 & $+12$ \\
5  & 77 & 77 & 77 & 55 & 55 & $+10$ \\
12 & 62 & 60 & 59 & 59 & 57 & $+28$ \\
25 & 66 & 66 & 66 & 64 & 64 & $+21$ \\
\bottomrule
\end{tabular}
\end{table}

The table shows the expected U-shaped sensitivity. In this instance the best
performance occurs near eight nodes per fragment.

On rand-100-0.1 with $k=12$, the progression is
init${}=89$, O1${}=62$, O2${}=60$, O3${}=59$, flat${}=87$.
The self-consistent first stage captures most of the gain, while dimer and
trimer corrections add smaller improvements. The delta totals also show how
the dimer stage acts: at $k=12$, the total dimer delta falls from 59 after
O1 to 57 after O2, while the objective improves from 62 to 60. Delta guides
the search toward remaining interactions; the reported quantity is the
accepted objective improvement.

% ---------------------------------------------------------------------------
\subsection{DIMACS Benchmark Instances}

The next comparison moves from synthetic 3-coloring examples to standard
DIMACS coloring instances. Here the number of colors is chosen per instance,
and the graphs include the DSJC random-graph family, a Leighton instance,
a queen graph, and myciel5. These are more representative of the benchmark
setting in which a flat coloring heuristic may already be quite strong.

% Static copy of the pipeline-generated fragment; numbers verified
% identical to the seeded-log output by both authors. Regenerate via make.
\begin{table}[ht]
\centering
\caption{DIMACS benchmarks with TabuCol, 10 seeds.}
\label{tab:dimacs}
\small
\begin{tabular}{@{}lrrrr ccc c@{}}
\toprule
Instance & $N$ & $|E|$ & $k$ & $q$
  & MBE (mean$\pm$std) & flat (mean$\pm$std) & $\Delta$ (mean$\pm$std) & sig \\
\midrule
DSJC125.1  & 125 & 736   & 12 & 5  & $10.8\pm1.2$  & $23.6\pm1.8$  & $+12.8\pm2.2$ & $^{***}$ \\
DSJC250.1  & 250 & 3218  & 25 & 8  & $36.6\pm2.6$  & $62.5\pm0.7$  & $+25.9\pm2.9$ & $^{***}$ \\
DSJC500.1  & 500 & 12458 & 39 & 12 & $102.2\pm4.7$ & $152.4\pm0.8$ & $+50.2\pm4.8$ & $^{***}$ \\
le450\_15a & 450 & 8168  & 39 & 15 & $35.8\pm2.3$  & $58.8\pm2.7$  & $+23.0\pm4.2$ & $^{***}$ \\
queen8\_8  & 64  & 728   & 8  & 9  & $5.5\pm1.6$   & $9.3\pm2.6$   & $+3.8\pm3.3$  & $^{**}$ \\
myciel5    & 47  & 236   & 5  & 6  & $0.0\pm0.0$   & $0.0\pm0.0$   & $+0.0\pm0.0$  & cert \\
\bottomrule
\end{tabular}
\end{table}

MBE improves TabuCol on every nontrivial DIMACS row. The largest absolute
gains occur on the larger random and Leighton instances: $+50.2$ conflicts
on DSJC500.1 and $+23.0$ on le450\_15a. The myciel5 row is another useful
sanity check: both methods reach zero conflicts, and MBE preserves that
solution. These rows measure the effect of the wrapper on the same
underlying TabuCol routine.

% ---------------------------------------------------------------------------
\FloatBarrier
\subsection{MAX-CUT ($q=2$)}

For MAX-CUT we take $q=2$ and report the number of uncut edges, so a value
of zero means that every edge is cut.

% Static copy of the pipeline-generated fragment; numbers verified
% identical to the seeded-log output by both authors. Regenerate via make.
\begin{table}[ht]
\centering
\caption{MAX-CUT, $q=2$, 10 seeds. Same pipeline as coloring.
{Reported values are uncut-edge counts (violations of the cut
objective); lower is better.}}
\label{tab:maxcut}
\small
\begin{tabular}{@{}lrrr ccc c@{}}
\toprule
Instance & $N$ & $|E|$ & $k$
  & MBE (mean$\pm$std) & flat (mean$\pm$std) & $\Delta$ (mean$\pm$std) & sig \\
\midrule
mc-grid-$5\times5$    & 25  & 40   & 5  & $0.0\pm0.0$    & $4.5\pm3.5$    & $+4.5\pm3.5$    & $^{**}$ \\
mc-grid-$8\times8$    & 64  & 112  & 8  & $0.0\pm0.0$    & $21.0\pm3.4$   & $+21.0\pm3.4$   & $^{***}$ \\
mc-rand-50-0.1        & 50  & 132  & 8  & $35.8\pm1.5$   & $42.9\pm0.3$   & $+7.1\pm1.6$    & $^{***}$ \\
mc-rand-100-0.1       & 100 & 488  & 12 & $157.1\pm2.3$  & $192.9\pm0.3$  & $+35.8\pm2.5$   & $^{***}$ \\
mc-rand-100-0.5       & 100 & 2494 & 12 & $1081.0\pm7.8$ & $1184.4\pm4.1$ & $+103.4\pm9.7$  & $^{***}$ \\
mc-rand-200-0.06      & 200 & 1187 & 20 & $392.9\pm8.6$  & $481.6\pm0.9$  & $+88.7\pm8.9$   & $^{***}$ \\
\bottomrule
\end{tabular}
\end{table}

The grid cases illustrate the convention: because the grids are bipartite,
a perfect cut exists, and MBE finds zero violations in all seeds. The random
instances show the same qualitative pattern as coloring. MBE reduces the
mean violation count by $35.8$ on mc-rand-100-0.1, by $103.4$ on the denser
mc-rand-100-0.5 instance, and by $88.7$ on mc-rand-200-0.06. The same solver
interface and objective representation carry across pairwise problems
unchanged.

% ---------------------------------------------------------------------------
\FloatBarrier
\subsection{Equal-Time Comparison}

The previous tables compare a wrapped run with a flat run under the standard
budgets used by the implementation. The equal-time comparison controls for
CPU time by giving the flat solver at least as much runtime as the MBE run.

% Static copy of the pipeline-generated fragment; numbers verified
% identical to the seeded-log output by both authors. Regenerate via make.
\begin{table}[ht]
\centering
\caption{Equal-time comparison: flat receives at least the same CPU time as
MBE. {Representative seed 2026; CPU times
(\texttt{System.CPUTime}) compare like-for-like runs on the same machine and
are hardware-dependent in absolute terms.}}
\label{tab:equaltime}
\small
\begin{tabular}{@{}lr ll r l@{}}
\toprule
Instance & $q$ & MBE (time) & flat (time) & $\Delta$ & note \\
\midrule
DSJC125.1         & 5  & 11 (0.12\,s)   & 25 (0.15\,s)   & $+14$  & \\
DSJC250.1         & 8  & 41 (0.38\,s)   & 62 (0.68\,s)   & $+21$  & \\
DSJC500.1         & 12 & 103 (2.89\,s)  & 153 (6.17\,s)  & $+50$  & MBE $2\times$ faster \\
le450\_15a        & 15 & 37 (1.18\,s)   & 56 (3.63\,s)   & $+19$  & MBE $3\times$ faster \\
rand-500-0.03     & 3  & 566 (1.84\,s)  & 752 (2.26\,s)  & $+186$ & \\
mc-rand-100-0.5   & 2  & 1076 (0.57\,s) & 1178 (0.79\,s) & $+102$ & \\
\bottomrule
\end{tabular}
\end{table}

MBE has the lower objective in every listed case, and on the two large
DIMACS instances it is faster outright: on DSJC500.1, MBE reaches 103
conflicts against 153 for flat TabuCol with the flat run taking roughly
twice the CPU time, and on le450\_15a it reaches 37 against 56 with the flat
run over three times slower. (Absolute timings in the table are
machine-dependent; \cref{app:repro} records the hardware, and the
MBE-to-flat time ratios are the reproducible quantity.) In these runs the
correction sequence improves both objective value and runtime.

% ---------------------------------------------------------------------------
\FloatBarrier
\subsection{Multi-Start Baseline}

Another baseline spends the flat solver's budget on several independent
starts. This compares MBE's structured order-based search with random basin
sampling. We split the same total flat budget across ten restarts and keep
the best result.

% Static copy of the pipeline-generated fragment; numbers verified
% identical to the seeded-log output by both authors. Regenerate via make.
\begin{table}[ht]
\centering
\caption{MBE vs.\ 10-start flat TabuCol. {Representative seed 2026.}
The flat budget is split across
restarts.}
\label{tab:multistart}
\small
\begin{tabular}{@{}lr rrr l@{}}
\toprule
Instance & $q$ & MBE & flat$\times$1 & flat$\times$10 & MBE advantage \\
\midrule
DSJC125.1       & 5  & 11   & 25   & 25   & $+14$ vs.\ flat$\times$10 \\
DSJC500.1       & 12 & 103  & 153  & 153  & $+50$ vs.\ flat$\times$10 \\
le450\_15a      & 15 & 37   & 56   & 56   & $+19$ vs.\ flat$\times$10 \\
rand-500-0.03   & 3  & 566  & 752  & 752  & $+186$ vs.\ flat$\times$10 \\
mc-rand-100-0.5 & 2  & 1076 & 1181 & 1205 & $+129$ vs.\ flat$\times$10 \\
\bottomrule
\end{tabular}
\end{table}

Under the fixed total budget used here, splitting the flat run across ten
random restarts gives little additional benefit. The wrapped solver remains
substantially better on the tested instances. On mc-rand-100-0.5, the
10-start flat run is even worse than the single-start flat run under this
budget split, and MBE improves over both. The gap therefore reflects the
structure of the MBE search, not merely access to more initial conditions.

% ---------------------------------------------------------------------------
\FloatBarrier
\subsection{Solver-Agnostic Demonstration}

The solver interface requires a restricted subproblem call and a candidate
assignment in return. To test this interface, we run the same order-based
wrapper with TabuCol, SA, and DSATUR. DSATUR is included deliberately as a
likely negative case: its ordering rule depends strongly on global
saturation information, which fragmentation removes.

% Static copy of the pipeline-generated fragment; numbers verified
% identical to the seeded-log output by both authors. Regenerate via make.
\begin{table}[ht]
\centering
\caption{Solver comparison under the same MBE pipeline, contiguous
partitioning, 10 seeds.}
\label{tab:agnostic}
\small
\begin{tabular}{@{}llr cc c c@{}}
\toprule
Instance & solver & $N$
  & MBE (mean$\pm$std) & flat (mean$\pm$std) & $\Delta$ (mean$\pm$std) & sig \\
\midrule
DSJC125.1       & TabuCol & 125 & $10.8\pm1.2$   & $23.6\pm1.8$   & $+12.8\pm2.2$  & $^{***}$ \\
DSJC125.1       & SA      & 125 & $10.8\pm2.2$   & $11.7\pm2.6$   & $+0.9\pm3.9$   &  \\
DSJC250.1       & TabuCol & 250 & $36.6\pm2.6$   & $62.5\pm0.7$   & $+25.9\pm2.9$  & $^{***}$ \\
DSJC250.1       & SA      & 250 & $41.8\pm3.4$   & $55.4\pm6.4$   & $+13.6\pm8.0$  & $^{***}$ \\
DSJC500.1       & TabuCol & 500 & $102.2\pm4.7$  & $152.4\pm0.8$  & $+50.2\pm4.8$  & $^{***}$ \\
DSJC500.1       & SA      & 500 & $111.6\pm8.4$  & $153.0\pm0.0$  & $+41.4\pm8.4$  & $^{***}$ \\
le450\_15a      & TabuCol & 450 & $35.8\pm2.3$   & $58.8\pm2.7$   & $+23.0\pm4.2$  & $^{***}$ \\
le450\_15a      & SA      & 450 & $37.2\pm3.1$   & $61.9\pm0.3$   & $+24.7\pm3.2$  & $^{***}$ \\
rand-500-0.03   & TabuCol & 500 & $577.3\pm9.3$  & $750.1\pm2.0$  & $+172.8\pm9.1$ & $^{***}$ \\
rand-500-0.03   & SA      & 500 & $553.1\pm5.9$  & $572.0\pm17.8$ & $+18.9\pm20.0$ & $^{*}$ \\
mc-rand-100-0.1 & TabuCol & 100 & $157.1\pm2.3$  & $192.9\pm0.3$  & $+35.8\pm2.5$  & $^{***}$ \\
mc-rand-100-0.1 & SA      & 100 & $145.9\pm3.4$  & $146.8\pm3.7$  & $+0.9\pm3.5$   &  \\
{rand-100-0.1}    & {DSATUR} & {100} & {$61.0\pm0.0$}  & {$58.8\pm0.6$}  & {$-2.2\pm0.6$}  & {${}^{***}$} \\
{rand-200-0.06}   & {DSATUR} & {200} & {$166.0\pm0.0$} & {$164.7\pm0.5$} & {$-1.3\pm0.5$}  & {${}^{***}$} \\
{rand-500-0.03}   & {DSATUR} & {500} & {$549.0\pm0.0$} & {$534.9\pm5.8$} & {$-14.1\pm5.8$} & {${}^{***}$} \\
{mc-rand-100-0.1} & {DSATUR} & {100} & {$152.0\pm0.0$} & {$160.0\pm2.0$} & {$+8.0\pm2.0$} & {${}^{***}$} \\
\bottomrule
\end{tabular}
\end{table}

TabuCol benefits most. SA improves on most rows, with two non-significant
cases. DSATUR is the measured negative control: on the coloring family,
promotion significantly \emph{hurts} it (\cref{tab:agnostic}, all
$p<0.001$), because DSATUR relies on global saturation-degree ordering and
fragmentation removes precisely the information that makes it effective. On
MAX-CUT, where ordering matters less, the promoted DSATUR is mildly positive
(for $q=2$ the same solver applies unchanged: saturation-degree-ordered
greedy reassignment over the two classes, with the uncut-edge count as the
objective). The pattern is a property of the wrapped solver: promotion helps
solvers whose strength is local search under fixed boundaries, and can harm
solvers whose strength is a global ordering heuristic.

% ---------------------------------------------------------------------------
\FloatBarrier
\subsection{Partition Sensitivity}
\label{sec:partition-sensitivity}

Standard graph-partitioning intuition suggests that a good decomposition
should minimize cross-fragment edges. For MBE this is a hypothesis to test
rather than assume: boundary edges are visible to the delta values and can be
revisited by dimer and higher-order passes, while the partition also controls
the search space available inside each fragment. We measure the latter by the
mean intra-fragment edge density
\[
  \bar\rho = \frac{1}{k}\sum_{a=1}^{k}
  \frac{2\,|E(F_a)|}{|F_a|(|F_a|-1)},
\]
with singleton fragments assigned density zero.

The two quantities are not independent. Every edge is either internal or cut,
so $\sum_a |E(F_a)| = |E| - \mathrm{cut}(\mathcal F)$: at fixed fragment
sizes, any reduction in the cut necessarily concentrates the same edges
inside the fragments. Cut and density are two views of a single trade-off,
not two independent partition criteria.

\begin{table}[ht]
\centering
\caption{Partition comparison for TabuCol, 10 seeds. The BFS-grown bisection reduces the cross-edge count and raises the mean intra-fragment density $\bar\rho$; fragments are held approximately balanced. Cut is the cross-edge count (contiguous/bisection). The MBE gain is statistically indistinguishable between the two partitions despite the four- to eight-fold density difference.}\label{tab:partition}
\small
\setlength{\tabcolsep}{4pt}
\begin{tabular}{@{}lrr r cc cc@{}}
\toprule
Instance & $q$ & $k$ & cut (c/b)
  & contig $\Delta$ & bisect $\Delta$
  & $\bar\rho_{\mathrm{c}}$ & $\bar\rho_{\mathrm{b}}$ \\
\midrule
rand-100-0.1     & 3 & 12 & 454/399   & $+27.1\pm3.6$  & $+31.3\pm2.4$  & $0.079$ & $0.308$ \\
rand-200-0.06    & 3 & 20 & 1150/1042 & $+54.7\pm6.6$  & $+53.7\pm6.9$  & $0.041$ & $0.260$ \\
rand-500-0.03    & 3 & 39 & 3640/3429 & $+172.8\pm9.1$ & $+175.9\pm16.9$& $0.026$ & $0.215$ \\
mc-rand-100-0.1  & 2 & 12 & 454/399   & $+35.8\pm2.5$  & $+37.2\pm3.1$  & $0.079$ & $0.308$ \\
mc-rand-200-0.06 & 2 & 20 & 1150/1042 & $+88.7\pm8.9$  & $+93.0\pm7.2$  & $0.041$ & $0.260$ \\
\bottomrule
\end{tabular}
\end{table}

Table~\ref{tab:partition} compares the contiguous partition with a BFS-grown
bisection that reduces cross edges while holding fragments approximately
balanced (Appendix~\ref{app:repro}). Two facts stand out. First, on
unstructured random graphs the cut reduction available to a graph-aware
partitioner is modest, $6$--$12\%$; this is expected, since $G(n,p)$ has no
community structure to exploit. Second, the same partitions differ sharply in
internal density: bisection raises $\bar\rho$ by factors of four to eight
(from $0.026$ to $0.215$ on rand-500-0.03), because BFS-grown fragments are
locally connected whereas contiguous index blocks are essentially random
subsets at the ambient density.

Despite that four- to eight-fold change in the local search space, the
promoted solver's advantage is unchanged: the MBE gain under bisection is
statistically indistinguishable from, or marginally better than, the gain
under contiguous partitioning on every coloring and MAX-CUT instance tested.
Within the range probed here, neither the cut nor the intra-fragment density
predicts the size of the MBE gain.

The practical reading is that on unstructured instances the promotion
mechanism is robust to the partition: a naive contiguous split performs as
well as a graph-aware one, and no partitioning package is needed to obtain
the reported gains. What the partition must not do is degenerate --- a
partitioner that lets one fragment absorb most of the graph reduces MBE to a
flat solve. This robustness is a property of these instances rather than a
universal claim: the facility-location study of Section~\ref{subsec:ufl}
exhibits the opposite behavior and identifies the condition that separates
the two cases.

% ---------------------------------------------------------------------------
\subsection{Generality Check: Facility Location}
\label{subsec:ufl}

To confirm that the framework is not specific to coloring-type
objectives, we also apply it to uncapacitated facility location (UFL):
minimize $\sum_{i\in\mathrm{Open}} f_i + \sum_c \min_{i\in\mathrm{Open}}
\alpha\, d(c,i)$ over the set of open facilities, on Euclidean instances
(facilities and clients uniform in the unit square, opening costs
$f_i\sim U[1.0,2.5]$, $\alpha=1$). Facilities are the sites, the open set is
the configuration, and the pluggable solver is Metropolis open/close flip
search; each client belongs to the fragment of its geometrically nearest
facility.
Because these objectives are real-valued, this experiment should
be interpreted as a generality check for the black-box promotion interface,
not as an application of the integer-valued convergence bound in
Theorem~\ref{thm:convergence}: monotonicity still holds, but the sharp
$M=f(x^{(0)})$ descent count does not apply to arbitrary real energies.
UFL also exposes a structural contrast with coloring: the fragment
interaction is \emph{cooperative} rather than repulsive. For covering-type
objectives the union of two fragments can serve boundary clients from the
cheaper side, so $E(A\cup B)-E(A)-E(B)\le 0$; the correction hierarchy is
unchanged, but the dimer pass selects the \emph{most negative} deltas
(largest unexploited sharing) instead of positive ones (frustration).
Unlike the pairwise conflict objectives above, UFL is used here as an
operational black-box promotion test: client assignment induces higher-order
dependence over open facilities, so the dimer spectrum is a heuristic
prioritization signal, not an exact second-order closure claim.
Table~\ref{tab:ufl} reports the same ten-seed protocol as coloring under two
partitions: geographically coherent fragments (facilities grouped into
spatial strips) versus generation-order fragments (spatially random).

% Static copy of the pipeline-generated fragment; numbers verified
% identical to the seeded-log output by both authors. Regenerate via make.
\begin{table}[ht]
\centering
\caption{{Facility location (UFL), cost (lower is better);
$\Delta=E_{\text{flat}}-E_{\text{MBE}}$, paired-$t$ markers as elsewhere.
{One fixed generated instance per size (generator seed $42$); the ten
seeds vary the initial open set and solver randomness. On these instances the
geographic-partition gain is significant under solver-seed variation and
increases with size; under spatially random partitioning it vanishes or
reverses. $E^\ast$ is the proven optimum (GLPK mixed-integer solve of the
standard UFL formulation); gap $=(E_{\text{MBE}}-E^\ast)/E^\ast$.
$E^\ast$ is instance-level and shown once, on the geographic rows.}}}
\label{tab:ufl}
\footnotesize
\setlength{\tabcolsep}{2.5pt}
\begin{tabular}{@{}lrrr ccc l cc@{}}
\toprule
{Instance} & {$n_f$} & {$n_c$} & {$k$}
  & {$E_{\text{MBE}}$} & {$E_{\text{flat}}$} & {$\Delta$} & {sig}
  & {$E^\ast$} & {gap} \\
\midrule
{\emph{geographic partition}} & & & & & & & & & \\
{ufl-25-100}  & {25}  & {100} & {5}  & {$24.24\pm0.16$} & {$24.69\pm0.41$} & {$+0.44\pm0.37$ ($+1.8\%$)} & {${}^{**}$}  & {$24.11$} & {$0.5\%$} \\
{ufl-50-200}  & {50}  & {200} & {8}  & {$39.98\pm0.28$} & {$40.42\pm0.43$} & {$+0.44\pm0.54$ ($+1.1\%$)} & {${}^{*}$}   & {$39.48$} & {$1.3\%$} \\
{ufl-100-400} & {100} & {400} & {12} & {$63.04\pm0.11$} & {$64.08\pm0.51$} & {$+1.04\pm0.58$ ($+1.6\%$)} & {${}^{***}$} & {$62.86$} & {$0.3\%$} \\
{ufl-200-800} & {200} & {800} & {16} & {$94.83\pm0.50$} & {$97.38\pm1.12$} & {$+2.55\pm1.26$ ($+2.6\%$)} & {${}^{***}$} & {$93.55$} & {$1.4\%$} \\
\addlinespace
{\emph{spatially random partition}} & & & & & & & & & \\
{ufl-25-100}  & {25}  & {100} & {5}  & {$24.46\pm0.25$} & {$24.69\pm0.41$} & {$+0.23\pm0.49$ ($+0.9\%$)} & {}          & & \\
{ufl-50-200}  & {50}  & {200} & {8}  & {$40.86\pm0.62$} & {$40.42\pm0.43$} & {$-0.43\pm0.72$ ($-1.1\%$)} & {}          & & \\
{ufl-100-400} & {100} & {400} & {12} & {$64.75\pm0.95$} & {$64.08\pm0.51$} & {$-0.66\pm0.83$ ($-1.0\%$)} & {${}^{*}$}  & & \\
{ufl-200-800} & {200} & {800} & {16} & {$97.89\pm1.60$} & {$97.38\pm1.12$} & {$-0.51\pm1.46$ ($-0.5\%$)} & {}          & & \\
\bottomrule
\end{tabular}
\end{table}

The UFL experiment is a black-box promotion and generality
  check, not an equal-compute comparison: the MBE pipeline consumes up
  to roughly nine times the flat solver's flip budget at
  $n_f=200$. The certified-optimum column bounds what any further
  computation could recover.  Two observations carry over from
  coloring. First, the promoted solver is not only better on average
  but far more reliable: at $n_f=100$ the MBE standard deviation is
  $0.11$ against the flat solver's $0.51$, and the promoted
    solver sits within $0.3$--$1.4\%$ of the proven optimum at every
    size ($E^\ast$ column, GLPK-certified) versus $2.0$--$4.1\%$ for
    the flat solver. Second, the partition must match the objective's
  interaction locality. This is where the covering objective departs from the
  coloring benchmarks, whose MBE gain is insensitive to the partition
  (Section~\ref{sec:partition-sensitivity}):
  geographically coherent fragments keep client territories intact and
  MBE wins at every size, while spatially random fragments scatter
  each territory across the system and the same pipeline becomes
  neutral or harmful. The same structure dependence appears in a further non-coloring domain: an auxiliary Ising-model check promotes a spin-flip
solver on one-dimensional ferromagnetic chains, whose nearest-neighbor
interactions any contiguous partition respects, and on fully connected
Sherrington--Kirkpatrick couplings~\cite{sherrington1975solvable}, which no
partition can localize. The chain and SK cases bracket the same
locality-of-interaction requirement seen above; these auxiliary results are
reproducible from the accompanying code (\texttt{--ising-benchmark}) but
not tabulated here.

% ---------------------------------------------------------------------------
\subsection{Quantum Resource Diagnostics}
\label{subsec:quantum}

The final experiments use the same fragment information for a resource
question rather than a new classical benchmark: if an expensive quantum
routine is available only on small active regions, which regions should be
sent to it? The implementation exports the fragments, boundaries, and
positive-delta groups produced by the classical MBE run; a resource script
then computes three quantities from the exported active regions: the number
of qubits required by different encodings, a small depth-2 QAOA calibration
as a function of active qubit count, and the lightcone sizes relevant to
simulation. These subsections are resource diagnostics and an outlook, not
performance claims; the simulation protocol is recorded in
\cref{app:repro}. The allocation rule is the delta spectrum itself:
zero-delta fragments are left classical, and positive-delta neighborhoods
define candidate subproblems for quantum hardware.

% Static copy of the pipeline-generated fragment; numbers verified
% identical to the seeded-log output by both authors. Regenerate via make.
\begin{table}[ht]
\centering
\caption{Qubit requirements at scale for graph coloring with 3 colors.
$Q_\text{pen}$: penalty QUBO ($n\times q$).
$Q_\text{pro}$: profit encoding ($n$).
$Q_\text{hyb}$: MBE + profit, using the largest frustrated fragment plus
boundary.}
\label{tab:qubits}
\small
\begin{tabular}{@{}rr r r rrr r r@{}}
\toprule
$n$ & $|E|$ & $k$ & zero-$\Delta$
  & $Q_\text{pen}$ & $Q_\text{pro}$ & $Q_\text{hyb}$ & ratio & MBE+ \\
\midrule
16  & 24   & 4  & 4/4  & 48   & 16  & 0   & $\infty$ & $+0$   \\
50  & 132  & 8  & 2/8  & 150  & 50  & 35  & $4\times$ & $+4$   \\
100 & 488  & 12 & 0/12 & 300  & 100 & 72  & $4\times$ & $+27$  \\
200 & 1187 & 20 & 0/20 & 600  & 200 & 110 & $5\times$ & $+60$  \\
500 & 3719 & 39 & 0/39 & 1500 & 500 & 188 & $8\times$ & $+186$ \\
\bottomrule
\end{tabular}
\end{table}

\Cref{tab:qubits} reports the resulting qubit counts for 3-coloring. The
penalty QUBO column uses $nq$ binary variables, while the profit encoding
uses $n$. MBE plus profit encoding further restricts the quantum call to the
largest frustrated fragment plus its boundary. The resulting hybrid qubit
count is 72 for $n=100$, 110 for $n=200$, and 188 for $n=500$, corresponding
to reductions of $4\times$, $5\times$, and $8\times$ relative to the penalty
encoding.

% ---------------------------------------------------------------------------
\subsection{QAOA Approximation Quality}

Qubit counts alone do not say whether a quantum call is useful. We therefore
include a small calibration experiment for the profit-encoded depth-2 QAOA
routine used by the resource script. The calibration groups random active
subproblems by qubit count and compares the QAOA value with the exact
optimum for those small subproblems.

% Static copy of the pipeline-generated fragment; numbers verified
% identical to the seeded-log output by both authors. Regenerate via make.
\begin{table}[ht]
\centering
\caption{Depth-2 profit QAOA approximation ratio
(5 random graphs per size, density~0.35).}
\label{tab:qaoa}
\small
\begin{tabular}{@{}r cc cc l@{}}
\toprule
Qubits & Avg ratio & Min ratio & Avg opt & Avg QAOA & Assessment \\
\midrule
4--8   & 1.00 & 1.00 & 3.3 & 3.3 & Optimal  \\
9--10  & 0.95 & 0.67 & 4.5 & 4.2 & Good     \\
11--12 & 0.83 & 0.67 & 4.6 & 3.8 & Degraded \\
\bottomrule
\end{tabular}
\end{table}

Depth-2 profit QAOA is reliable on the smallest subproblems and degrades
once the active region reaches roughly 9--12 qubits. MBE is useful here not
because it establishes quantum advantage, but because it identifies smaller
active neighborhoods where future quantum calls can be concentrated.

% ---------------------------------------------------------------------------
\subsection{Lightcone Reduction}

The same exported active regions also determine the \emph{lightcone} size
for QAOA simulation. At depth~$p$, evaluating a local expectation value
depends only on variables within a radius-$p$ neighborhood, i.e., the
lightcone, of the interaction graph. On the full graph this lightcone can cover
most of a large random instance. After MBE, the candidate quantum call is
restricted to a fragment-local active region plus its boundary, so the
corresponding lightcone can be much smaller.

% Static copy of the pipeline-generated fragment; numbers verified
% identical to the seeded-log output by both authors. Regenerate via make.
\begin{table}[ht]
\centering
\caption{Full-graph QAOA lightcone sizes for one expectation-value
evaluation at depths $p=1$ and $p=2$.}
\label{tab:full-lightcone}
\small
\begin{tabular}{@{}l rr r rr@{}}
\toprule
Instance & $N$ & $|E|$ & avg.\ degree & $p=1$ & $p=2$ \\
\midrule
grid-$4\times4$   & 16  & 24   & 3.0  & 8  & 14  \\
queen-$5\times5$  & 25  & 160  & 12.8 & 23 & 25  \\
rand-50-0.1       & 50  & 132  & 5.3  & 19 & 48  \\
rand-100-0.1      & 100 & 488  & 9.8  & 34 & 99  \\
rand-200-0.06     & 200 & 1187 & 11.9 & 42 & 185 \\
rand-500-0.03     & 500 & 3719 & 14.9 & 50 & 399 \\
\bottomrule
\end{tabular}
\end{table}

% Static copy of the pipeline-generated fragment; numbers verified
% identical to the seeded-log output by both authors. Regenerate via make.
\begin{table}[ht]
\centering
\caption{Lightcone sizes: full graph vs.\ MBE fragments ($p=2$).}
\label{tab:lightcone}
\small
\begin{tabular}{@{}l rr r l@{}}
\toprule
Instance & full $p{=}2$ & frag $p{=}2$ & saved & note \\
\midrule
rand-50-0.1    & 48  & 35  & $+13$  & \\
rand-100-0.1   & 99  & 71  & $+28$  & \\
rand-200-0.06  & 185 & 102 & $+83$  & \\
rand-500-0.03  & 399 & 157 & $+242$ & full intractable \\
\bottomrule
\end{tabular}
\end{table}

On rand-500, the full $p=2$ lightcone is 399 qubits, while the largest
MBE-fragment lightcone is 157 qubits; across the random family, the saving
grows from 13 qubits at $n=50$ to 242 at $n=500$. This does not make
depth-2 QAOA competitive with classical heuristics at present; it makes the
resource-allocation problem explicit and substantially smaller.

% ===========================================================================
\section{Related Work}

\paragraph{Many-body and fragmentation methods.}
Many-Body Expansion methods originate in computational chemistry, where the
energy of a large molecular system is decomposed into fragment, pair, triple,
and higher-order contributions~\cite{kitaura1999fmo,gordon2012fragmentation,
fedorov2007fmo}. The present work transfers this organizing principle to
discrete optimization. The common mathematical ingredient is M\"obius
inversion on a lattice of fragment subsets~\cite{rota1964mobius}. In our
setting, M\"obius inversion gives an exact evaluation expansion of the
objective: an objective whose terms involve at most $r$ fragments is recovered
at order~$r$. The optimization algorithm then uses this expansion as a
correction hierarchy around black-box local solver calls.

\paragraph{Decomposition in optimization.}
Classical decomposition methods also solve large problems by coordinating
smaller subproblems. ADMM~\cite{boyd2011admm} coordinates local solves through
dual variables and residual updates, and is especially effective for
continuous convex problems. When adapted to discrete optimization, however,
it does not generally inherit finite convergence guarantees. Dantzig-Wolfe
decomposition~\cite{dantzig1960decomposition} coordinates subproblems through
a master problem and column generation. MBE differs from both in that the
local solver is treated as an unchanged black box, while coordination is
handled by self-consistent cycling and higher-order fragment corrections.

\fzrev{\paragraph{Heuristic neighborhood and block methods.}
The Stage~1 cycling is, in isolation, block-coordinate local
search~\cite{tseng2001convergence}, and jointly re-solving fragment unions is
reminiscent of large-neighborhood search~\cite{shaw1998lns,pisinger2010lns},
ruin-and-recreate~\cite{schrimpf2000ruin}, and variable-neighborhood
search~\cite{mladenovic1997vns}, which also escape local minima by exploring
larger, structured neighborhoods. What MBE adds is not the neighborhoods but
the accounting: the M\"obius/evaluation hierarchy specifies which unions are
worth solving and in what order, the delta spectrum and dimer residuals give
per-pair diagnostics and certificates rather than a single acceptance test,
promotion is monotone for any black-box solver by construction, and truncation
carries the exactness statements of Section~3. In LNS terms, MBE supplies a
principled, problem-structure-driven neighborhood schedule with quantitative
stopping evidence, rather than a randomized destroy/repair heuristic.}

Junction-tree methods~\cite{lauritzen1988local} give exact dynamic
programs when the interaction structure has bounded treewidth. Our
acyclic message-passing result is the disjoint-fragment analogue of
this principle: when fragment interactions form a forest and messages
retain full response functions, exact optimization is possible. The
practical MBE algorithm studied here uses disjoint fragments and
heuristic correction passes, while junction trees use overlapping bags
and exact separator messages.  \fzadd{The distinction is sharpest when
  the fragment-intersection structure contains a cycle.} \fzfix{On a
  grid of $K_4$ cliques, for example, exact coordination on the same
  partition used by MBE (eliminating each fragment together with its
  boundary) requires bags of size $20$, i.e.~$>10^{12}$ table entries
  for $q=4$ (an optimal tree decomposition can reduce the bag size,
  but any decomposition must still span whole cliques and their
  matchings), whereas MBE's cycling-plus-dimer coordination on the
  identical partition costs $O(n\,B\,k)$, where $n$ is the number of
  sites, $B$ the per-fragment solver iteration budget
  (Appendix~\ref{app:repro}), and $k$ the fragment count
  ($\sim\!2\times10^5$ operations in our case). MBE reaches a
  zero-conflict coloring in our runs; exact elimination would
  necessarily find one too (the instances are $q$-colorable), at that
  cost.}  \fzadd{The comparison is by cost regime, not dominance:
  junction trees retain an exactness guarantee whenever the bag size
  is manageable; MBE trades that worst-case guarantee for a cheaper
  correction hierarchy whose quality is empirical and
  instance-dependent.}

\paragraph{Graph coloring heuristics.}
Graph coloring has a long history of effective local-search methods,
including TabuCol~\cite{hertz1987tabu}, reactive tabu search
\cite{blochliger2008reactive}, and hybrid evolutionary approaches
\cite{galinier1999hybrid}. These specialized solvers can reach stronger
absolute results than the simple local solvers used in our experiments. Our
goal is different: MBE is a promotion framework, not a new coloring heuristic.
The experiments ask whether the same solver improves when wrapped in the MBE
decomposition, and whether the same wrapper can be reused across structurally
different solvers.

\paragraph{Quantum-classical optimization.}
The quantum-resource experiments are motivated by the gap between the size of
interesting optimization instances and the size of subproblems that near-term
quantum routines can plausibly handle. Penalty-free profit encodings reduce
the number of qubits required for coloring-style objectives~\cite{angara2025penalty}.
QAOA~\cite{farhi2014qaoa} and tensor-network simulation tools such as
QTensor~\cite{lykov2021qtensor} remain strongly limited by active lightcone
size. In this context, the MBE delta spectrum is useful as a diagnostic for
where quantum resources might be concentrated. We do not claim quantum
advantage; the contribution is a decomposition and resource-allocation
mechanism that exposes smaller active subproblems.

% ===========================================================================
\section{Conclusion}

We presented a Many-Body Expansion framework for scaling local solvers for
discrete optimization. The framework partitions a problem into fragments,
\fzfix{cycles a black-box solver over those fragments until a sweep produces
no strict objective decrease,} and then applies higher-order corrections on
selected unions of fragments. The solver interface is deliberately minimal: the solver
only needs to accept a restricted subproblem and return a candidate assignment.

The formal results separate evaluation from optimization. M\"obius inversion
gives an exact fragment expansion of the objective, and pairwise objectives
such as graph coloring, MAX-CUT, and QUBO close at second order for
evaluation. Optimization still requires coordination among fragments. \fzfix{Stage~1
terminates finitely for non-negative integer-valued objectives under
non-worsening acceptance with strict-progress termination,} and the
correction hierarchy is monotone
because accepted updates never increase the full objective. For acyclic
fragment interaction structures, exact coordination is obtained by retaining
response-function messages, matching the dynamic-programming principle behind
junction-tree methods.

The experiments show that MBE can improve local search in practice. With the
same pipeline, TabuCol and simulated annealing generally produce better
solutions than their flat counterparts on graph coloring and MAX-CUT
benchmarks, often in less CPU time. The partition experiments show that on unstructured graphs
the MBE gain is robust to the choice of decomposition: graph-aware bisection
changes the mean intra-fragment density by factors of four to eight without
changing the promoted solver's advantage. Partition choice becomes decisive
only when the objective carries exploitable interaction structure, as in the
facility-location study, where fragments that ignore geometry erase the gain
entirely.

The delta spectrum should be interpreted as a diagnostic, not as a global
optimality certificate. It identifies fragment boundaries and fragment unions
where residual interaction remains large, and it is useful for prioritizing
classical corrections or quantum subproblem calls. Future work includes
wrapping stronger coloring heuristics, characterizing which objectives are
partition-sensitive and which are not, and integrating exact or quantum
subproblem solvers into the same MBE correction hierarchy.
\fzadd{A useful next benchmark is therefore not simply a larger graph, but a
three-way stress test: low treewidth instances where exact junction-tree
methods are competitive, dense instances where fragmentation is expected to
struggle, and intermediate structured instances where MBE should have its
largest practical advantage.}

\appendix
\section{Counterexample Constructions}
\label{app:counterexamples}

This appendix records the two explicit constructions referenced in
\cref{sec:mbe}. Construction~1 is cited in the remark following
\cref{prop:locality}: it separates the delta spectrum (a diagnostic) from
the dimer residual (a certificate) by exhibiting a state with zero delta
spectrum but positive residual. Construction~2 supports
\cref{thm:gap}\textup{(b)}: it shows that independently committed dimer
re-optimizations are not exact even when the fragment-intersection graph is
a tree, which is why exactness in \cref{thm:gap}\textup{(a)} requires
message passing with full response functions rather than committed local
moves.

\fzrev{Both constructions use explicit non-negative integer-weighted pairwise
and unary terms; unary terms can be realized as edges to auxiliary fixed
structure without changing the fragment-intersection topology.

\textbf{Construction 1 (zero delta spectrum, positive dimer residual).}
Two binary variables $a,b\in\{0,1\}$ in singleton fragments $F_1=\{a\}$,
$F_2=\{b\}$, with objective
\[
  f(a,b) \;=\; 5\,[a\neq b] \;+\; [a=1] \;+\; 3\,[b=0].
\]
At $x=(0,0)$: $f(x)=3$ and the cross term vanishes, so
$\Delta(F_1,F_2)=5\,[a\neq b]=0$. Both single-fragment re-optimizations are stuck (flipping
$b$ alone costs $5-3=2$ more; flipping $a$ alone costs $6$ more), yet the
joint dimer state $(1,1)$ has $f=1$, so
$g_{F_1\cup F_2}(x)=3-1=2>0$: the intra-fragment improvement of $b$'s unary
term is unlocked only by simultaneously moving the boundary variable $a$ in
the other fragment. Hence $\Delta_{\mathrm{total}}=0$ does not certify
$g_U=0$.

\textbf{Construction 2 (star: all dimer residuals vanish, order 3 improves).}
Binary variables $c,u_1,u_2,u_3$ in singleton fragments; cross terms couple
only the center to the leaves, so the fragment-intersection graph is the star
$K_{1,3}$ (a tree). Objective:
\[
  f(c,u_1,u_2,u_3) \;=\; 2\,[c=0] \;+\; \sum_{i=1}^{3}\,[c\neq u_i].
\]
At $x=(0,0,0,0)$: $f(x)=2$. Every single-fragment re-optimization is stuck. For each center--leaf
dimer $U=\{c,u_i\}$ the restricted objective satisfies
$A_U(0,0)=2$, $A_U(0,1)=3$, $A_U(1,0)=3$, $A_U(1,1)=2$
(the last from the two exterior disagreements $[c\neq u_j]$, $j\neq i$),
so $\min A_U = A_U(x_U)$ and $g_U(x)=0$; leaf--leaf dimers are uncoupled and
likewise have zero residual. Yet the order-3 move on $\{c,u_1,u_2\}$ to
$(1,1,1)$ with $u_3=0$ fixed gives $f=1<2$, a strict improvement, and the
full flip $(1,1,1,1)$ attains the optimum $f=0$. Committed independent dimer
re-optimization therefore cannot be exact even on trees: a fragment shared by
several dimers can require a cooperative move invisible to every individual
dimer residual, exactly as claimed in Theorem~\ref{thm:gap}(b).}

% ---------------------------------------------------------------------------
\section{Experimental Protocol}
\label{app:repro}

\fzadd{Every comparison in this paper isolates the MBE wrapper from solver
tuning: the flat and promoted runs share the instance, the initial assignment,
and the local-solver family, and all objective values are evaluated on the full
problem.} \fzdata{The accompanying source code regenerates the experiment logs behind
every non-exact-solver column (\texttt{scripts/regenerate\_paper\_data.py});
the certified $E^\ast$ column of the facility-location table is produced
separately by \texttt{scripts/ufl\_optimum\_glpk.py}, whose GLPK solution
certificates ship in \texttt{paper\_runs/ufl-optima/}. This
appendix records the exact settings used.}

\fzdata{\textbf{Seeds and statistics.} All multi-seed entries use the ten fixed
seeds $\{2026, 42, 77, 99, 55, 33, 1001, 2025, 314, 777\}$ and report
mean$\pm$standard deviation. Significance markers are paired $t$-tests on the
ten paired seed results.} \fzfix{Reported standard deviations follow the
population convention (divide by $n$); every significance marker in the paper
is identical under the population and sample conventions. Single-run entries
(Tables~\ref{tab:equaltime} and the multi-start table) use seed $2026$.}

\textbf{Hardware and software environment.} All experiment logs, including
the CPU times in Table~\ref{tab:equaltime}, were produced in a single run on
a MacBook Pro (Apple M2, 16\,GB RAM) under macOS, single-threaded, with the
Haskell code compiled by GHC~9.4.8 via \texttt{cabal}. CPU times are measured
with \texttt{System.CPUTime} and are hardware-dependent in absolute terms;
the MBE/flat time \emph{ratios} are the reproducible quantity. The $E^\ast$
certificates were produced by GLPK's \texttt{glpsol}.

\fzdata{\textbf{Initialization and partition.} Colorings are initialized by
descending-degree greedy assignment; the contiguous partition splits the site
list into $k$ roughly equal contiguous blocks, and the bisection partition is
BFS-grown to \fzfix{reduce} cross-edges, with the BFS frontier truncated at
the half-size limit so that fragments remain approximately balanced (an
unbalanced partitioner that lets one fragment absorb most of the graph makes
the promoted solver degenerate to a flat solve). Mean intra-fragment density is
$\bar\rho=\frac1k\sum_a 2|E(F_a)|/(|F_a|(|F_a|-1))$, with singleton fragments
assigned zero.}

\fzdata{\textbf{Local solvers and budgets.} The flat baseline receives budget
$15n$ for $n$ sites. Promoted runs use Stage~1 Gauss--Seidel cycling
(up to $15$ sweeps, $100$ solver iterations per fragment), a dimer correction
pass ($200$ iterations), and a trimer pass ($300$ iterations, fragment-union
size capped at $50$). TabuCol uses tenure $\max(1,\lfloor|F_a|/6\rfloor)$;
simulated annealing uses geometric cooling from $\max(1,\tfrac12 f)$ to
$0.01$; DSATUR uses saturation-degree ordering with a stagnation cap of $20$.}

\fzdom{\textbf{Facility location (Table~\ref{tab:ufl}).} Euclidean instances
with $n_f$ facilities and $n_c$ clients uniform in the unit square, opening
costs $f_i\sim U[1.0,2.5]$, assignment cost $\alpha\,d(c,i)$ with $\alpha=1$;
one fixed instance per size (generator seed $42$), with the ten run seeds
controlling the initial open set (each facility open with probability $0.3$)
and the solver randomness. The pluggable solver is Metropolis open/close flip
search with geometric cooling from $\max(0.05, 0.05\,E_0)$ to $10^{-3}$ and
best-so-far tracking; the flat baseline receives budget $15 n_f$. Fragment
energies use the territory convention (each client belongs to the fragment of
its geometrically nearest facility; a fragment with no open facility pays a
per-client penalty of $2\sqrt{2}\,\alpha$), under which the true objective is
recovered on the full site set and cross-fragment deltas are cooperative
($\le 0$). The MBE pipeline is Gauss--Seidel cycling ($100$ iterations per
fragment, up to $15$ sweeps) followed by a dimer pass over the $k$ most
negative cross-energy pairs ($200$ iterations each), guarded on the full
objective; \texttt{Promotion.*} is unchanged, only the pair-selection sign
differs. The geographic partition sorts facilities into
\fzfix{$\mathrm{round}(\sqrt{k})$} vertical strips by $x$, then by~$y$ within
each strip; the spatially random partition chops generation order.}

\fzfix{\textbf{Quantum diagnostics (Tables in Sections~4.10--4.12).} QAOA
results use depth $p=2$ circuits simulated with the PennyLane statevector
backend (feasible up to $20$ qubits), $50$ optimizer steps, and $2$ restarts
per instance; approximation ratios are averaged over $5$ random graphs per
size with edge density $0.35$ and per-graph seeds $71s+13n$ for
$s\in\{0,\dots,4\}$. \fzfix{Optimization uses PennyLane's
\texttt{GradientDescentOptimizer} with learning rate $0.08$; candidate
bitstrings are drawn as $1024$ samples of the output distribution, accepted
above a probability threshold of $0.003$, and coloring candidates are
completed greedily.} Qubit counts use the penalty encoding
($n\times q$ qubits) and the profit encoding ($n$ qubits); the hybrid column
takes the largest positive-delta fragment plus its boundary. The pipeline is
\texttt{--quantum-classical} followed by \texttt{paper\_qaoa\_results.py}.}

% ===========================================================================
\section*{Code and Data Availability}
The complete source code, the seeded experiment logs and table-generation
pipeline, and the GLPK optimality certificates accompany this submission;
every table in the paper regenerates from them.

\bibliography{cites}

%% BioMed_Central_Bib_Style_v1.01

\begin{thebibliography}{19}
% BibTex style file: bmc-mathphys.bst (version 2.1), 2014-07-24
\ifx \bisbn   \undefined \def \bisbn  #1{ISBN #1}\fi
\ifx \binits  \undefined \def \binits#1{#1}\fi
\ifx \bauthor  \undefined \def \bauthor#1{#1}\fi
\ifx \batitle  \undefined \def \batitle#1{#1}\fi
\ifx \bjtitle  \undefined \def \bjtitle#1{#1}\fi
\ifx \bvolume  \undefined \def \bvolume#1{\textbf{#1}}\fi
\ifx \byear  \undefined \def \byear#1{#1}\fi
\ifx \bissue  \undefined \def \bissue#1{#1}\fi
\ifx \bfpage  \undefined \def \bfpage#1{#1}\fi
\ifx \blpage  \undefined \def \blpage #1{#1}\fi
\ifx \burl  \undefined \def \burl#1{\textsf{#1}}\fi
\ifx \doiurl  \undefined \def \doiurl#1{\url{https://doi.org/#1}}\fi
\ifx \betal  \undefined \def \betal{\textit{et al.}}\fi
\ifx \binstitute  \undefined \def \binstitute#1{#1}\fi
\ifx \binstitutionaled  \undefined \def \binstitutionaled#1{#1}\fi
\ifx \bctitle  \undefined \def \bctitle#1{#1}\fi
\ifx \beditor  \undefined \def \beditor#1{#1}\fi
\ifx \bpublisher  \undefined \def \bpublisher#1{#1}\fi
\ifx \bbtitle  \undefined \def \bbtitle#1{#1}\fi
\ifx \bedition  \undefined \def \bedition#1{#1}\fi
\ifx \bseriesno  \undefined \def \bseriesno#1{#1}\fi
\ifx \blocation  \undefined \def \blocation#1{#1}\fi
\ifx \bsertitle  \undefined \def \bsertitle#1{#1}\fi
\ifx \bsnm \undefined \def \bsnm#1{#1}\fi
\ifx \bsuffix \undefined \def \bsuffix#1{#1}\fi
\ifx \bparticle \undefined \def \bparticle#1{#1}\fi
\ifx \barticle \undefined \def \barticle#1{#1}\fi
\bibcommenthead
\ifx \bconfdate \undefined \def \bconfdate #1{#1}\fi
\ifx \botherref \undefined \def \botherref #1{#1}\fi
\ifx \url \undefined \def \url#1{\textsf{#1}}\fi
\ifx \bchapter \undefined \def \bchapter#1{#1}\fi
\ifx \bbook \undefined \def \bbook#1{#1}\fi
\ifx \bcomment \undefined \def \bcomment#1{#1}\fi
\ifx \oauthor \undefined \def \oauthor#1{#1}\fi
\ifx \citeauthoryear \undefined \def \citeauthoryear#1{#1}\fi
\ifx \endbibitem  \undefined \def \endbibitem {}\fi
\ifx \bconflocation  \undefined \def \bconflocation#1{#1}\fi
\ifx \arxivurl  \undefined \def \arxivurl#1{\textsf{#1}}\fi
\csname PreBibitemsHook\endcsname

%%% 1
\bibitem[\protect\citeauthoryear{Gordon et~al.}{2012}]{gordon2012fragmentation}
\begin{barticle}
\bauthor{\bsnm{Gordon}, \binits{M.S.}},
\bauthor{\bsnm{Fedorov}, \binits{D.G.}},
\bauthor{\bsnm{Pruitt}, \binits{S.R.}},
\bauthor{\bsnm{Slipchenko}, \binits{L.V.}}:
\batitle{Fragmentation methods: A route to accurate calculations on large
  systems}.
\bjtitle{Chem.\ Rev.}
\bvolume{112}(\bissue{1}),
\bfpage{632}--\blpage{672}
(\byear{2012})
\doiurl{10.1021/cr200093j}
\end{barticle}
\endbibitem

%%% 2
\bibitem[\protect\citeauthoryear{Fedorov and Kitaura}{2007}]{fedorov2007fmo}
\begin{barticle}
\bauthor{\bsnm{Fedorov}, \binits{D.G.}},
\bauthor{\bsnm{Kitaura}, \binits{K.}}:
\batitle{Extending the power of quantum chemistry to large systems with the
  fragment molecular orbital method}.
\bjtitle{J.\ Phys.\ Chem.~A}
\bvolume{111},
\bfpage{6904}--\blpage{6914}
(\byear{2007})
\doiurl{10.1021/jp0716740}
\end{barticle}
\endbibitem

%%% 3
\bibitem[\protect\citeauthoryear{Hertz and {de Werra}}{1987}]{hertz1987tabu}
\begin{barticle}
\bauthor{\bsnm{Hertz}, \binits{A.}},
\bauthor{\bsnm{{de Werra}}, \binits{D.}}:
\batitle{Using tabu search techniques for graph coloring}.
\bjtitle{Computing}
\bvolume{39}(\bissue{4}),
\bfpage{345}--\blpage{351}
(\byear{1987})
\doiurl{10.1007/BF02239976}
\end{barticle}
\endbibitem

%%% 4
\bibitem[\protect\citeauthoryear{Sherrington and
  Kirkpatrick}{1975}]{sherrington1975solvable}
\begin{barticle}
\bauthor{\bsnm{Sherrington}, \binits{D.}},
\bauthor{\bsnm{Kirkpatrick}, \binits{S.}}:
\batitle{Solvable model of a spin-glass}.
\bjtitle{Physical Review Letters}
\bvolume{35}(\bissue{26}),
\bfpage{1792}--\blpage{1796}
(\byear{1975})
\doiurl{10.1103/PhysRevLett.35.1792}
\end{barticle}
\endbibitem

%%% 5
\bibitem[\protect\citeauthoryear{Kitaura et~al.}{1999}]{kitaura1999fmo}
\begin{barticle}
\bauthor{\bsnm{Kitaura}, \binits{K.}},
\bauthor{\bsnm{Ikeo}, \binits{E.}},
\bauthor{\bsnm{Asada}, \binits{T.}},
\bauthor{\bsnm{Nakano}, \binits{T.}},
\bauthor{\bsnm{Uebayasi}, \binits{M.}}:
\batitle{Fragment molecular orbital method: An approximate computational method
  for large molecules}.
\bjtitle{Chem.\ Phys.\ Lett.}
\bvolume{313},
\bfpage{701}--\blpage{706}
(\byear{1999})
\doiurl{10.1016/S0009-2614(99)00874-X}
\end{barticle}
\endbibitem

%%% 6
\bibitem[\protect\citeauthoryear{Rota}{1964}]{rota1964mobius}
\begin{barticle}
\bauthor{\bsnm{Rota}, \binits{G.-C.}}:
\batitle{On the foundations of combinatorial theory.~{I}. theory of {M\"obius}
  functions}.
\bjtitle{Z.\ Wahrscheinlichkeitstheorie verw.\ Gebiete}
\bvolume{2}(\bissue{4}),
\bfpage{340}--\blpage{368}
(\byear{1964})
\doiurl{10.1007/BF00531932}
\end{barticle}
\endbibitem

%%% 7
\bibitem[\protect\citeauthoryear{Boyd et~al.}{2011}]{boyd2011admm}
\begin{barticle}
\bauthor{\bsnm{Boyd}, \binits{S.}},
\bauthor{\bsnm{Parikh}, \binits{N.}},
\bauthor{\bsnm{Chu}, \binits{E.}},
\bauthor{\bsnm{Peleato}, \binits{B.}},
\bauthor{\bsnm{Eckstein}, \binits{J.}}:
\batitle{Distributed optimization and statistical learning via the alternating
  direction method of multipliers}.
\bjtitle{Found.\ Trends Mach.\ Learn.}
\bvolume{3}(\bissue{1}),
\bfpage{1}--\blpage{122}
(\byear{2011})
\doiurl{10.1561/2200000016}
\end{barticle}
\endbibitem

%%% 8
\bibitem[\protect\citeauthoryear{Dantzig and
  Wolfe}{1960}]{dantzig1960decomposition}
\begin{barticle}
\bauthor{\bsnm{Dantzig}, \binits{G.B.}},
\bauthor{\bsnm{Wolfe}, \binits{P.}}:
\batitle{Decomposition principle for linear programs}.
\bjtitle{Oper.\ Res.}
\bvolume{8}(\bissue{1}),
\bfpage{101}--\blpage{111}
(\byear{1960})
\doiurl{10.1287/opre.8.1.101}
\end{barticle}
\endbibitem

%%% 9
\bibitem[\protect\citeauthoryear{Tseng}{2001}]{tseng2001convergence}
\begin{barticle}
\bauthor{\bsnm{Tseng}, \binits{P.}}:
\batitle{Convergence of a block coordinate descent method for nondifferentiable
  minimization}.
\bjtitle{J.\ Optim.\ Theory Appl.}
\bvolume{109}(\bissue{3}),
\bfpage{475}--\blpage{494}
(\byear{2001})
\end{barticle}
\endbibitem

%%% 10
\bibitem[\protect\citeauthoryear{Shaw}{1998}]{shaw1998lns}
\begin{bchapter}
\bauthor{\bsnm{Shaw}, \binits{P.}}:
\bctitle{Using constraint programming and local search methods to solve vehicle
  routing problems}.
In: \bbtitle{Principles and Practice of Constraint Programming ({CP}'98)}.
\bsertitle{LNCS},
vol. \bseriesno{1520},
pp. \bfpage{417}--\blpage{431}.
\bpublisher{Springer},
\blocation{Berlin, Heidelberg}
(\byear{1998})
\end{bchapter}
\endbibitem

%%% 11
\bibitem[\protect\citeauthoryear{Pisinger and Ropke}{2010}]{pisinger2010lns}
\begin{bchapter}
\bauthor{\bsnm{Pisinger}, \binits{D.}},
\bauthor{\bsnm{Ropke}, \binits{S.}}:
\bctitle{Large neighborhood search}.
In: \beditor{\bsnm{Gendreau}, \binits{M.}},
\beditor{\bsnm{Potvin}, \binits{J.-Y.}} (eds.)
\bbtitle{Handbook of Metaheuristics},
pp. \bfpage{399}--\blpage{419}.
\bpublisher{Springer},
\blocation{Boston, MA}
(\byear{2010})
\end{bchapter}
\endbibitem

%%% 12
\bibitem[\protect\citeauthoryear{Schrimpf et~al.}{2000}]{schrimpf2000ruin}
\begin{barticle}
\bauthor{\bsnm{Schrimpf}, \binits{G.}},
\bauthor{\bsnm{Schneider}, \binits{J.}},
\bauthor{\bsnm{Stamm-Wilbrandt}, \binits{H.}},
\bauthor{\bsnm{Dueck}, \binits{G.}}:
\batitle{Record breaking optimization results using the ruin and recreate
  principle}.
\bjtitle{J.\ Comput.\ Phys.}
\bvolume{159}(\bissue{2}),
\bfpage{139}--\blpage{171}
(\byear{2000})
\end{barticle}
\endbibitem

%%% 13
\bibitem[\protect\citeauthoryear{Mladenovi{\'c} and
  Hansen}{1997}]{mladenovic1997vns}
\begin{barticle}
\bauthor{\bsnm{Mladenovi{\'c}}, \binits{N.}},
\bauthor{\bsnm{Hansen}, \binits{P.}}:
\batitle{Variable neighborhood search}.
\bjtitle{Comput.\ Oper.\ Res.}
\bvolume{24}(\bissue{11}),
\bfpage{1097}--\blpage{1100}
(\byear{1997})
\end{barticle}
\endbibitem

%%% 14
\bibitem[\protect\citeauthoryear{Lauritzen and
  Spiegelhalter}{1988}]{lauritzen1988local}
\begin{barticle}
\bauthor{\bsnm{Lauritzen}, \binits{S.L.}},
\bauthor{\bsnm{Spiegelhalter}, \binits{D.J.}}:
\batitle{Local computations with probabilities on graphical structures and
  their application to expert systems}.
\bjtitle{J.\ R.\ Stat.\ Soc.\ Ser.~B}
\bvolume{50}(\bissue{2}),
\bfpage{157}--\blpage{224}
(\byear{1988})
\doiurl{10.1111/j.2517-6161.1988.tb01721.x}
\end{barticle}
\endbibitem

%%% 15
\bibitem[\protect\citeauthoryear{Bl{\"o}chliger and
  Zufferey}{2008}]{blochliger2008reactive}
\begin{barticle}
\bauthor{\bsnm{Bl{\"o}chliger}, \binits{I.}},
\bauthor{\bsnm{Zufferey}, \binits{N.}}:
\batitle{A graph coloring heuristic using partial solutions and a reactive tabu
  scheme}.
\bjtitle{Comput.\ Oper.\ Res.}
\bvolume{35}(\bissue{3}),
\bfpage{960}--\blpage{975}
(\byear{2008})
\doiurl{10.1016/j.cor.2006.05.014}
\end{barticle}
\endbibitem

%%% 16
\bibitem[\protect\citeauthoryear{Galinier and Hao}{1999}]{galinier1999hybrid}
\begin{barticle}
\bauthor{\bsnm{Galinier}, \binits{P.}},
\bauthor{\bsnm{Hao}, \binits{J.-K.}}:
\batitle{Hybrid evolutionary algorithms for graph coloring}.
\bjtitle{J.\ Combin.\ Optim.}
\bvolume{3}(\bissue{4}),
\bfpage{379}--\blpage{397}
(\byear{1999})
\doiurl{10.1023/A:1009823419804}
\end{barticle}
\endbibitem

%%% 17
\bibitem[\protect\citeauthoryear{Angara et~al.}{2025}]{angara2025penalty}
\begin{botherref}
\oauthor{\bsnm{Angara}, \binits{P.P.}},
\oauthor{\bsnm{Lykov}, \binits{D.}},
\oauthor{\bsnm{Stege}, \binits{U.}},
\oauthor{\bsnm{Alexeev}, \binits{Y.}},
\oauthor{\bsnm{M{\"u}ller}, \binits{H.}}:
The Art of Avoiding Constraints: A Penalty-Free Approach to Constrained
  Combinatorial Optimization with {QAOA}
(2025)
\end{botherref}
\endbibitem

%%% 18
\bibitem[\protect\citeauthoryear{Farhi et~al.}{2014}]{farhi2014qaoa}
\begin{botherref}
\oauthor{\bsnm{Farhi}, \binits{E.}},
\oauthor{\bsnm{Goldstone}, \binits{J.}},
\oauthor{\bsnm{Gutmann}, \binits{S.}}:
A Quantum Approximate Optimization Algorithm
(2014)
\end{botherref}
\endbibitem

%%% 19
\bibitem[\protect\citeauthoryear{Lykov et~al.}{2021}]{lykov2021qtensor}
\begin{bchapter}
\bauthor{\bsnm{Lykov}, \binits{D.}},
\bauthor{\bsnm{Chen}, \binits{A.}},
\bauthor{\bsnm{Chen}, \binits{H.}},
\bauthor{\bsnm{Keipert}, \binits{K.}},
\bauthor{\bsnm{Zhang}, \binits{Z.}},
\bauthor{\bsnm{Gibbs}, \binits{T.}},
\bauthor{\bsnm{Alexeev}, \binits{Y.}}:
\bctitle{Performance evaluation and acceleration of the {QTensor} quantum
  circuit simulator on {GPU}s}.
In: \bbtitle{Proc.\ IEEE/ACM 2nd Int.\ Workshop on Quantum Computing Software
  ({QCS}'21)},
pp. \bfpage{27}--\blpage{34}.
\bpublisher{IEEE},
\blocation{St.~Louis, MO, USA}
(\byear{2021}).
\bcomment{arXiv:2204.06045}
\end{bchapter}
\endbibitem

\end{thebibliography}
\end{document}